\newtheorem{theorem}{Theorem}[section]
\newtheorem{thm}{Theorem}[section]
\newtheorem{corollary}[theorem]{Corollary}
\newtheorem{cor}[thm]{Corollary}
\newtheorem{lemma}[thm]{Lemma}
\newtheorem{lem}[thm]{Lemma}
\newtheorem{proposition}[thm]{Proposition}
\theoremstyle{definition}
\newtheorem{rem}[theorem]{Remark}
\newcommand{\imod}[1]{\allowbreak\mkern4mu({\operator@font mod}\,\,#1)}
\numberwithin{equation}{section}
\DeclareMathOperator{\GL}{GL}
\DeclareMathOperator{\Sp}{Sp}
\renewcommand{\bf}{\textbf}
 \renewcommand{\to}{\rightarrow}
\newcommand{\ol}{\overline}
\newcommand{\Spec}{\mathrm{Spec}}
\newcommand{\gm}{\mathfrak{m}}
\newcommand{\gB}{\mathfrak{B}}
\newcommand{\gC}{\mathfrak{C}}
\newcommand{\gG}{\mathfrak{G}}
\newcommand{\gH}{\mathfrak{H}}
\newcommand{\gU}{\mathfrak{U}}
\newcommand{\gX}{\mathfrak{X}}
\newcommand{\GG}{\mathbb{G}}
\newcommand{\FF}{\mathbb{F}}
\newcommand{\cB}{\mathcal{B}}
\begin{document}
\title[]{Unipotent Subgroups of Stabilizers}

\author[P. Gille]{Philippe Gille}\address{UMR 5208
Institut Camille Jordan - Universit\'e Claude Bernard Lyon 1
43 boulevard du 11 novembre 1918
69622 Villeurbanne cedex - France 
}

\email{gille@math.univ-lyon1.fr}

\author[R. M.  Guralnick]{Robert Guralnick}
\address{Department of Mathematics, University of Southern California, Los Angeles, CA 90089-2532, USA}
\email{guralnic@usc.edu}

\subjclass[2020]{Primary 14L15; secondary 14L30, 20B15, 20G15}

\date{\today}

\dedicatory{For James Humphreys} 

\thanks{The second author was partially supported  by the NSF grant DMS-1901595 and a Simons Foundation Fellowship 609771.}

\begin{abstract}    We consider semi-continuity of certain dimensions on group schemes. 
\end{abstract}

\maketitle

\section{Introduction}    

Let $G$ be an algebraic group over a field $k$. 
Let $d_u(G)$ the maximal dimension of a (smooth) connected unipotent subgroup of $G_{\ol{k}}$.  
Using techniques \`a la Demazure-Grothendieck, we 
show the following result.

\begin{thm} \label{t:unipotent rank}
Let $S$ be a scheme and let $G$ be a separated $S$-group scheme  of finite presentation (for example an affine 
$S$-group of finite presentation). Then the function
$d_u$ on $S$ is upper semi-continuous. 
\end{thm}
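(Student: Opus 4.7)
The plan is to reduce upper semi-continuity to a single specialization statement over a discrete valuation ring, and then to control specialization by taking the schematic closure of a subgroup scheme in the generic fiber.

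By standard limit arguments (EGA~IV, \S8), one reduces to the case when $S$ is Noetherian. To show that $\{s\in S:d_u(G_s)\ge d\}$ is closed for each $d$, it suffices, by Noetherian induction on closed irreducible subsets and the valuative criterion, to prove the local specialization statement: if $R$ is a DVR with generic point $\eta$ and closed point $s$, then $d_u(G_\eta)\le d_u(G_s)$. Fix a smooth connected unipotent subgroup $U\subseteq G_\eta$ of maximal dimension $d$; after a finite extension of $R$, we may assume $U$ is defined over $k(\eta)$.

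Next, I would form the schematic closure $\overline{U}$ of $U$ in $G$. Because $G/R$ is separated, $\overline{U}$ is a closed subgroup scheme of $G$: the key point is that schematic closure commutes with fiber products of $R$-flat subschemes inside a separated ambient scheme, so the multiplication and inversion of $U$ extend uniquely to $\overline{U}$, and the unit section lies in $\overline{U}$ (its image being closed in $G$ by separatedness). As a schematic closure over a DVR, $\overline{U}$ is $R$-flat, so $\overline{U}_s$ is a closed subgroup scheme of $G_s$ of dimension~$d$, and setting $H:=(\overline{U}_s)^0_{\mathrm{red}}$ (after base change to $\overline{k(s)}$) produces a smooth connected closed subgroup of $(G_s)_{\overline{k(s)}}$ of dimension~$d$.

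The main obstacle is to show that $H$ is unipotent. In the affine case highlighted by the theorem, one can étale-locally embed $G$ as a closed subgroup of $\GL_{n,R}$; inside $\GL_n$ the locus $V:=\{g:\det(tI-g)=(t-1)^n\}$ of unipotent elements is closed, and since $U\subseteq V_\eta$ we obtain $\overline{U}\subseteq V$, so every element of $\overline{U}_s(\overline{k(s)})$ is unipotent in $\GL_n$, forcing $H$ to be a unipotent algebraic group. For general (not necessarily affine) $G$, one may instead invoke the rigidity of subgroups of multiplicative type (SGA~3, Exp.~IX): if $H$ contained a nontrivial subtorus, that torus would, after passage to a suitable strict Henselization of $R$, extend to a subtorus of $\overline{U}$ and hence to a nontrivial torus in the unipotent group $U$, a contradiction. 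Either way, $H$ is a smooth connected unipotent subgroup of $(G_s)_{\overline{k(s)}}$ of dimension~$d$, so $d_u(G_s)\ge d=d_u(G_\eta)$, which is the required specialization inequality.
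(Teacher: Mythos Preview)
Your reduction to a DVR and use of the schematic closure $\overline{U}$ is exactly the paper's strategy, and your affine argument via the closed unipotent locus in $\GL_n$ is a clean variant of the paper's Lemma~\ref{lem_DVR_uni}. Two small adjustments: you should embed $\overline{U}$ (which is flat affine of finite type over $R$) rather than $G$, since $G$ is only assumed separated of finite presentation and need not be flat; and the phrase ``\'etale-locally'' is superfluous, since a flat affine group scheme of finite type over a DVR embeds in some $\GL_{n,R}$ outright.

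The genuine gap is in your treatment of the general (non-affine) case. The lifting of a subtorus from the special fiber to $\overline{U}$ via SGA~3, Exp.~IX/XI requires the ambient group scheme to be \emph{smooth} over the base; your $\overline{U}$ is merely flat. The paper's alternate proof (Section~\ref{sec_brian}) takes precisely your torus-lifting route but must work hard to get smoothness: it passes to a normalization $\widetilde{G}'$ over a local extension $A'$ (using results of Anantharaman and Prasad--Yu) before Grothendieck's lifting theorem can be applied. The much simpler fix, and the one the paper's main proof uses, is Raynaud's affineness criterion: since $\overline{U}$ is separated, flat, of finite presentation over a DVR with affine generic fiber $U$, the scheme $\overline{U}$ is itself affine. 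This collapses your general case into your affine case, with $\overline{U}$ playing the role of~$G$.
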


Upper semi-continuous means informally that the function 
jumps along closed sets.
In particular, the function is 
locally constant at the points of the minimal value locus. This gives the following useful corollary. 

\begin{cor} \label{c:unipotent}  Let $S$ be an irreducible  scheme and let $G$ be a separated  $S$-group scheme   of  finite presentation.
If for the generic point $\xi \in S$,  $G_{\ol{\kappa(\xi)}}$ contains a $d$-dimensional smooth unipotent subgroup, then the same is true
for  $G_{\ol{\kappa(s)}}$
for all $s \in S$.  
\end{cor}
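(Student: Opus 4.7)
The plan is to deduce this as a nearly immediate consequence of Theorem~\ref{t:unipotent rank}: upper semi-continuity of $d_u$ on an irreducible base allows us to propagate a lower bound from the generic point to every point.

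Concretely, I would first translate Theorem~\ref{t:unipotent rank} into the statement that for each integer $n$, the subset
\[
  Z_n \;:=\; \{\, s \in S : d_u(G_{\ol{\kappa(s)}}) \geq n\,\}
\]
is closed in $S$. The hypothesis says that $G_{\ol{\kappa(\xi)}}$ contains a smooth unipotent subgroup of dimension~$d$; passing to the identity component yields a smooth connected unipotent subgroup of the same dimension, so $d_u(G_{\ol{\kappa(\xi)}}) \geq d$, i.e.\ $\xi \in Z_d$. Since $S$ is irreducible, the closure of $\{\xi\}$ is all of $S$, and the closed set $Z_d$ must therefore equal $S$. This gives $d_u(G_{\ol{\kappa(s)}}) \geq d$ for every $s \in S$.

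To finish, I would extract from a possibly larger smooth connected unipotent subgroup a subgroup of dimension exactly~$d$. For this I would invoke the classical fact that a smooth connected unipotent group $U$ over an algebraically closed field is nilpotent and admits a central composition series with $\mathbb{G}_a$-factors; in particular, $U$ contains a smooth connected (unipotent) subgroup of each dimension $0,1,\ldots,\dim U$. Applied to any smooth connected unipotent subgroup of $G_{\ol{\kappa(s)}}$ witnessing $d_u(G_{\ol{\kappa(s)}}) \geq d$, this yields the desired $d$-dimensional smooth unipotent subgroup.

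There is no real obstacle: the whole content is packaged into Theorem~\ref{t:unipotent rank}. The only substantive remark is the observation that for an integer-valued function, upper semi-continuity is equivalent to closedness of each upper level set $\{d_u \geq n\}$, which is what makes the specialization from $\xi$ to arbitrary $s$ go through via irreducibility of $S$.
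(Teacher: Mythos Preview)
Your proposal is correct and follows exactly the route the paper intends: the authors do not give a separate proof of the corollary, stating only that it follows from Theorem~\ref{t:unipotent rank} via the remark that upper semi-continuity means the function ``jumps along closed sets.'' You have simply made this explicit, and your additional step~5 (extracting a subgroup of dimension exactly $d$ from one of dimension $\geq d$ via a composition series with $\mathbb{G}_a$-factors) is a legitimate detail the paper leaves implicit.
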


A case of special interest is the    group  scheme of stabilizers called
also the stabilizer of the diagonal \cite[V.10.2]{SGA3}. More precisely if $G$ is an $S$--group
scheme acting on a separated $S$--scheme $X$ of finite presentation
(with $S$ noetherian), we consider the fiber product
\[
\xymatrix{
 F \ar[r] \ar[d]&   X \ar[d]^\Delta \\
G \times_S X \ar[r] & X \times_S X & (g,x) \ar@{|->}[r] & (x,g.x).
}\]
It defines an $X$-group scheme $F$ which is a closed $X$-subgroup scheme of $G \times_S X$
of finite presentation
such that for each $x \in X$ of image $s \in S$, 
$F_{\kappa(x)} \subset G \times_{\kappa(s)}{\kappa(x)}$ is the stabilizer of the point $x$
for the action of $G \times_{\kappa(s)}{\kappa(x)}$ on $X \times_{\kappa(s)}{\kappa(x)}$.
In case $\kappa(s)=\kappa(x)$, the usual notation for $F_{\kappa(x)}$ is $G_x$.

One motivation for this question was
related to the base size  of finite groups acting primitively on a set and the existence of regular orbits for nontransitive actions.
An important  case is that of a finite simple group of Lie type over a finite field where the action comes from the algebraic group.
See \cite{BGS}. 
Another interesting case is when $G$ is a reductive algebraic group acting linearly on $X$ (or acting on the Grassmanian of a 
rational module).   See \cite{GG, GL,  PV} for more on this.
We give more details on this in Section \ref{s:base}. 

     Note that by the Lang-Steinberg theorem,  an algebraic group defined over $\FF_q$ has
 a Borel subgroup defined over $\FF_q$.   We also know that 
 if $U$ is a $d$-dimensional unipotent subgroup defined over $\FF_q$,  then $|U(\FF_q)| =q^d$ \cite{B,GG}.
 We   can apply Corollary \ref{c:unipotent} to the stabilizer scheme to obtain the following result. 
 
\begin{cor} \label{c:finite}   Let $G$ be a  algebraic group
 acting faithfully on an irreducible variety $X$ 
and assume that the $G,X$ and the action are defined over a finite field $\mathbb{F}_q$.   
Assume that there is a nonempty open subset $X_0$ of $X$ such that the stabilizer $G_x$ of $x \in X_0$ has a  $d$-dimensional
unipotent subgroup. 
Then for all $x \in X(\FF_q)$,  $G_x(\FF_q)$ contains a subgroup of size $q^d$. 
\end{cor}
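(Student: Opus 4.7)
The strategy is to apply Corollary~\ref{c:unipotent} to the $X$-group scheme $F$ of stabilizers (with base $S := X$). Since $F$ is a closed subscheme of $G \times_{\mathbb{F}_q} X$, it is separated and of finite presentation over $X$, and $X$ is irreducible by hypothesis. Because $X_0$ is nonempty and open in the irreducible scheme $X$, it contains the generic point $\xi$, so the hypothesis provides a $d$-dimensional smooth connected unipotent subgroup in $F_{\overline{\kappa(\xi)}} = (G_\xi)_{\overline{\kappa(\xi)}}$. Corollary~\ref{c:unipotent} then propagates this to every point: for all $x \in X$, the geometric fiber $(G_x)_{\overline{\kappa(x)}}$ contains a $d$-dimensional smooth connected unipotent subgroup.

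The second step is to descend from $\overline{\mathbb{F}}_q$ to $\mathbb{F}_q$. Fix $x \in X(\mathbb{F}_q)$, so $G_x$ is an algebraic group over $\mathbb{F}_q$. The connected component $G_x^0$ is also defined over $\mathbb{F}_q$, and it contains geometrically a $d$-dimensional smooth connected unipotent subgroup (being connected, any such subgroup lies in $G_x^0$). By Lang--Steinberg, as recalled just before the statement, $G_x^0$ admits a Borel subgroup $B$ defined over $\mathbb{F}_q$, and its unipotent radical $U := R_u(B)$ is then a smooth connected unipotent $\mathbb{F}_q$-subgroup of $G_x^0$. Since the maximal connected unipotent subgroups of $(G_x^0)_{\overline{\mathbb{F}}_q}$ are precisely the (conjugates of the) unipotent radical of a Borel, we have $\dim U \geq d$.

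The final step is purely group-theoretic. By the counting fact $|U(\mathbb{F}_q)| = q^{\dim U}$ recalled in the excerpt, $U(\mathbb{F}_q)$ is a finite $p$-group (with $q = p^a$) of order at least $q^d$. Any finite $p$-group contains a subgroup of every $p$-power order below its own, so $U(\mathbb{F}_q) \subseteq G_x(\mathbb{F}_q)$ contains a subgroup of size $q^d$, as required. I do not expect any serious obstacle: the content of the corollary is essentially the verification that the stabilizer scheme fits the hypotheses of Corollary~\ref{c:unipotent}, after which Lang--Steinberg and the elementary $p$-group fact finish the job.
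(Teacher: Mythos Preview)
Your proof is correct and follows exactly the approach the paper intends: apply Corollary~\ref{c:unipotent} to the stabilizer scheme, then use Lang--Steinberg and the count $|U(\FF_q)|=q^{\dim U}$ as recalled just before the statement. One small refinement: since stabilizer schemes need not be smooth (the paper itself notes this), you should pass to $((G_x)_{red})^0$ rather than $G_x^0$ before speaking of Borel subgroups---this is harmless because $\FF_q$ is perfect, so $(G_x)_{red}$ is a smooth $\FF_q$-subgroup and any smooth connected unipotent subgroup of $(G_x)_{\overline{\FF_q}}$ already lies in $((G_x)_{\overline{\FF_q},red})^0$.
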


One can ask more generally what other functions are upper semi-continuous.  Of
course, dimension is \cite[VI$_B$.4.3]{SGA3}.  In fact, we will show that other such functions are also upper semi-continuous.
 On the other hand,  if we define $d^0(G)$ to be the dimension of the derived
subgroup of the connected component of $G$, it is not true that $d^0$ need be upper semi-continuous.  
We study this in Section \ref{s:derived} and particularly for the smooth case. 
Smoothness rarely holds in the case of the stabilizer scheme and we give an example to show
the failure of upper semi-continuity for $d^0(G)$ in stabilizer schemes.

\smallskip

We use mostly the terminology  of 
Borel's book  \cite{B} and time to time
the more general setting
Demazure-Gabriel's book \cite{DG} and 
the SGA3 seminar \cite{SGA3} where in particular an algebraic group  is not supposed smooth.
All definitions are coherent.

In the next sections, we prove some preliminary results.  We prove Theorem \ref{t:unipotent rank} 
and other upper semi-continuity results in Section \ref{s:upper}.   In Section
\ref{s:derived}, we consider the dimension of the derived subgroup.  In the final section, we present
an alternate proof of Theorem \ref{t:unipotent rank} due to Brian Conrad. 
 
 \medskip

 \noindent{\bf{Acknowledgments}}: We thank 
 Jean-Pierre Serre,  Brian Conrad and Matthieu Romagny for valuable comments on a preliminary version  of the paper.
 We also thank Brian Conrad for allowing  us to include his alternate proof of Theorem \ref{t:unipotent rank}
 and Skip Garibaldi for the applications to the abelianization (Cor. \ref{cor_ab}).

\section{Definition of the rank functions}

Let $k$ be a field and 
let $G/k$ be an algebraic group.
We remind the reader that 
 a linear algebraic group $G$ is
unipotent if for each (or any) faithful
$\ol{k}$--representation
$\rho: G_{\ol k} \to \GL_{n,\ol{k}}$, 
$\rho( G(\ol k))$  consists in unipotent elements.
This agrees with the general definition given 
in \cite[4.2.2.1]{DG}, see 4.3.26.b of this reference.
In practice we use the equivalent definition
that $G$ admits a closed $k$-embedding in 
a $k$--group of strictly upper triangular matrices 
({\it ibid}, 4.2.2.5).

Similarly a  closed smooth $k$--subgroup $G$ of $\GL_n$
is trigonalizable if there exists $h \in \GL_n(k)$
such that $hGh^{-1} \subset B_n$ where $B_n$
is the $k$--Borel subgroup of $\GL_n$ 
consisting in upper triangular matrices; the same
holds for any linear representation $G \to \GL_r$
\cite[15.5]{B}.
That definitions holds more for an arbitrary affine
algebraic $k$--group since it is equivalent to the
Demazure-Gabriel's definition \cite[4.2.3.4]{DG}.

\subsection{Relative version}
 We define the following invariants:

\begin{enumerate}
\item  $D_u(k,G)= \hbox{Maximal dimension of an unipotent $k$-subgroup of $G$}$; 

\smallskip

\item $ D_{cu}(k,G)= \hbox{Maximal dimension of a  commutative unipotent $k$-subgroup of $G$}$;  

\smallskip 
 
\item
$ D_{t}(k,G)= \hbox{Maximal dimension of a
 trigonalizable $k$-subgroup of $G$}$;

\smallskip

\item $ D_n(k,G)= \hbox{Maximal dimension of an affine nilpotent $k$-subgroup of $G$}$;

\smallskip

\item $ D_{nt}(k,G)= \hbox{Maximal dimension of a nilpotent trigonalizable $k$-subgroup of $G$}$; 

\smallskip 

\item
$D_s(k,G)= \hbox{Maximal dimension of an affine solvable $k$-subgroup of $G$} $;

\smallskip

\item $ D_c(k,G)= \hbox{Maximal dimension of an affine  commutative $k$-subgroup of $G$}$;

\smallskip

\item $D'_c(k,G)= \hbox{Maximal dimension of a commutative $k$-subgroup of $G$}$;  

\smallskip

\item $ D'_n(k,G)= \hbox{Maximal dimension of a nilpotent $k$-subgroup of $G$} $;

\smallskip

\item $ D'_s(k,G)= \hbox{Maximal dimension of a solvable $k$-subgroup of $G$}$.  
\end{enumerate}

We have $D_c(k,G) \leq D'_c(k,G)$, $D_n(k,G) \leq D'_n(k,G)$
with equalities if $G$ is affine.  
We have the obvious inequalities $D'_c(k,G) \leq D'_n(k,G) \leq D'_s(k,G)$
and $D_{cu}(k,G) \leq \mathrm{Min}(D_{c}(k,G), D_{u}(k,G))$
and $D_u(k,G)  \leq  D_n(k,G)$,
 $D_{nt}(k,G) \leq \mathrm{Max} \bigl(D_n(G), D_t(k,G) \bigr) \leq D_s(k,G)$.

All these functions are increasing by change of fields.

\begin{lem} \label{lem_alg_closed}
 If $k$ is algebraically closed, then $D(k,G)=D(F,G)$ for 
 any field extension $F/k$ and for each function $D$ as above.
 \end{lem}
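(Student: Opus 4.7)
The inequality $D(k,G)\le D(F,G)$ is immediate: every property $P$ in the list is stable under arbitrary base change, so a $k$-subgroup $H\subseteq G$ of dimension $d$ with property $P$ gives $H_F\subseteq G_F$ of dimension $d$ with property $P$. For the reverse inequality the plan is classical: given an $F$-subgroup $H\subseteq G_F$ of dimension $d$ with property $P$, I would spread $H$ out over a finitely generated $k$-subalgebra of $F$ and then specialize at a $k$-rational closed point, using that $k=\overline{k}$.

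More precisely, by the standard spreading-out principle (EGA IV.8), there exist a finitely generated $k$-subalgebra $A\subseteq F$ and a closed $A$-subgroup scheme $\mathcal{H}\subseteq G\times_k\Spec A$ such that $\mathcal{H}\times_A F=H$. I would then seek a nonempty open $U\subseteq\Spec A$ such that for every $s\in U$ the fiber $\mathcal{H}_s\subseteq G_{\kappa(s)}$ is a subgroup of dimension $d$ satisfying $P$. Constancy of fiber dimension on an open dense subset follows from generic flatness of $\mathcal{H}\to\Spec A$ (EGA IV.6.9.1). For the property $P$ itself, the argument is case by case but uniform in spirit: commutativity is the closed condition that the commutator morphism $\mathcal{H}\times_A\mathcal{H}\to\mathcal{H}$ be trivial; nilpotence or solvability of a bounded class is encoded by the vanishing of iterated commutator morphisms after the appropriate step; unipotence can be witnessed by a closed embedding into a group of strictly upper-triangular matrices, which is defined over some localization of $A$ as soon as it exists at the generic point; trigonalizability is similar, using a closed $A$-form of a Borel of the ambient $\GL_n$ into which $\mathcal{H}$ fits after shrinking $\Spec A$. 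In each case, since the condition holds at the generic point of $\Spec A$, it holds on a nonempty open $U$.

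To finish, I invoke Hilbert's Nullstellensatz: because $k$ is algebraically closed and $A$ is of finite type over $k$, every maximal ideal of $A$ has residue field $k$, and such maximal ideals are dense in $\Spec A$. Choose a closed point $s_0\in U$; then $\kappa(s_0)=k$, and $\mathcal{H}_{s_0}$ is a $k$-subgroup of $G$ of dimension $d$ with property $P$. This shows $D(F,G)\le D(k,G)$ and completes the proof.

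The main obstacle is the middle step: verifying, uniformly across all ten properties in the list, that the locus in $\Spec A$ where the fibers of $\mathcal{H}$ satisfy $P$ contains a nonempty open subset whenever the generic fiber does. The purely algebraic conditions (commutative, nilpotent, solvable) are handled by closedness of the relevant commutator morphisms, but the conditions involving ``unipotent'' and ``trigonalizable'' need a bit more care, essentially requiring that an embedding into upper-triangular matrices, or a flag exhibiting trigonalizability, itself spreads out over an open of $\Spec A$.
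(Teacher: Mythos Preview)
Your proof is correct and follows essentially the same strategy as the paper: spread out the $F$-subgroup over a finitely generated $k$-subalgebra of $F$, shrink the base so that the dimension and the relevant property $P$ are preserved on fibers, then specialize at a $k$-rational closed point using that $k=\overline{k}$. The paper treats only the unipotent case explicitly (spreading out an embedding into strictly upper triangular matrices, exactly as you suggest) and declares the remaining cases similar.
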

 
 \begin{proof} We do it for $d_u$, the other cases being similar. 
We can assume that $F$ is algebraically closed
 so that $G_F$ admits a smooth unipotent $F$--subgroup $U$ of dimension $d$.
There exists a finitely generated $k$--subextension $E$ of $F$ 
such that $U$ is defined over $E$.
The field $E$ is 
function field of a smooth $k$-variety $X$. Up to shrinking $X$,
$U$ extends to a closed subgroup scheme $\gU$ of $G \times_k X$
which is smooth in view of \cite[VI$_B$.10]{SGA3}.
Up to shrinking one more time, $\gU$ is a closed subgroup scheme of the $X$-group scheme of 
strictly upper triangular matrices. Since $k$ is algebraically closed,
we have $X(k) \not = \emptyset$. The fiber at $x \in X(k)$
provides a smooth unipotent $k$--subgroup  $\gU_x \subset G$
of dimension $d$.
Thus  $D(k,G) \geq d$.
 \end{proof}

\subsection{Absolute version}

We define now 

\begin{enumerate}
\item  $d_u(G)= D_u(\ol{k},G)$, i.e.  the maximal  dimension of an unipotent $\ol k$-subgroup of $G_{\ol k}$; 

\smallskip

\item $d_{cu}(G)= D_{cu}(\ol{k},G)$; 

\smallskip 
 
\item $d_{t}(G)= D_{t}(\ol{k},G)$;

\smallskip

\item $d_{n}(G)= D_{n}(\ol{k},G)$;

\smallskip

\item $d_{nt}(G)= D_{nt}(\ol{k},G)$; 
\smallskip 

\item $d_{s}(G)= D_{s}(\ol{k},G)$;

\smallskip

\item $d_{c}(G)= D_{c}(\ol{k},G)$; 

\smallskip

\item $d'_{c}(G)= D'_{c}(\ol{k},G)$; 

\smallskip

\item $d'_{n}(G)= D'_{n}(\ol{k},G)$;

\smallskip

\item  $d'_{s}(G)= D'_{s}(\ol{k},G)$.  
\end{enumerate}

Clearly it does not depend of the choice of $\ol{k}$.
All these functions  are insensitive to  change of fields.

\begin{lem} \label{lem_alg_closed2}
 We have  $d(k,G)=d(F,G)$ for 
 any field extension $F/k$ and for each function $d$ as above.
 \end{lem}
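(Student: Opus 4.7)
The plan is to reduce the lemma directly to Lemma~\ref{lem_alg_closed}. First, I would unpack the notation. Writing $d$ for any of the ten functions in the absolute list, the convention is $d(k,G) = D(\ol{k},G_{\ol{k}})$, where $G$ is viewed as a $k$-group and $\ol{k}$ is a chosen algebraic closure; similarly $d(F,G) = D(\ol{F},G_{\ol{F}})$ once $G$ is base-changed to $F$. So the content of the lemma is the equality
$$D(\ol{k},G_{\ol{k}}) = D(\ol{F},G_{\ol{F}}).$$

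Next, I would fix compatible algebraic closures. Given the extension $F/k$, choose any algebraic closure $\ol{F}$ of $F$ and let $\ol{k}$ be the algebraic closure of $k$ inside $\ol{F}$. Then $\ol{F}/\ol{k}$ is a field extension, $\ol{k}$ is algebraically closed, and the compatibility of base change gives $(G_{\ol{k}})\otimes_{\ol{k}} \ol{F} = G_{\ol{F}}$. Applying Lemma~\ref{lem_alg_closed} to the algebraically closed field $\ol{k}$, the $\ol{k}$-group $G_{\ol{k}}$, and the extension $\ol{F}/\ol{k}$ yields the required identity simultaneously for each of the ten invariants.

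I do not expect any genuine obstacle: the real content already lives in Lemma~\ref{lem_alg_closed}. What Lemma~\ref{lem_alg_closed2} adds is the remark that in the definition $d(G) = D(\ol{k}, G_{\ol{k}})$ the role of $\ol{k}$ can equally well be played by any algebraic closure of any field containing $k$. The only point to double-check is that the spreading-out argument given in the proof of Lemma~\ref{lem_alg_closed} for $d_u$ goes through for each of the nine remaining properties (commutative unipotent, trigonalizable, affine nilpotent, affine solvable, and their variants). This is the step that takes the most care, but each property is a closed, finitely presented condition on subgroup schemes that is stable under shrinking the base $X$ and taking fibers at rational points, so no new idea is needed.
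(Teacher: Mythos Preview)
Your proposal is correct and follows essentially the same route as the paper: fix compatible algebraic closures $\ol{k}\subset\ol{F}$ and invoke Lemma~\ref{lem_alg_closed} for the extension $\ol{F}/\ol{k}$ to obtain $D(\ol{k},G)=D(\ol{F},G)$. Your closing remark about checking that the spreading-out argument in Lemma~\ref{lem_alg_closed} covers all ten invariants is a fair caveat, but it concerns the proof of that earlier lemma rather than the present one.
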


\begin{proof}
The function $d$ is the absolute version of a
relative rank function $D$.
Let $\ol{F}$ be an algebraic closure of $F$ containing 
$\ol{k}$. Lemma \ref{lem_alg_closed} shows that 
$D(\ol{k},G)=D(\ol{F},G)$ whence  $d(k,G)=d(F,G)$.
\end{proof}

Up to considering the connected reduced fiber, we have also 

\vskip-4mm

$$ d_u(G)= \hbox{Maximal dimension of a smooth connected unipotent  $\ol{k}$-subgroup of $G_{\ol{k}}$} ;
$$  
  and similarly for the other absolute rank functions.
Since affine smooth connected solvable  $\ol{k}$--subgroups
 trigonalizable by the Lie-Kolchin's theorem \cite[10.5]{B}
it follows that $d_s(k,G)=d_t(k,G)$ and similarly we have  $d_n(k,G)=d_{nt}(k,G)$.

We have $d_c(G) \leq d'_c(k,G)$, $d_n(G) \leq d'_n(G)$ and $d_c(G) \leq d'_c(G)$ 
with equalities if $G$ is affine.  
We have the obvious inequalities $d'_c(G) \leq d'_n(G) \leq d'_s(G)$
and $d_{cu}(G) \leq \mathrm{Min}(d_{c}(G), d_{u}(G))$
and $d_u(G)  \leq  d_n(G)  \leq d_s(G)$.

\subsection{Connections with the literature}
  
\noindent (a) In \cite[XII.1]{SGA3}, Grothendieck defines 
related  rank functions but which are different. For example 
the Grothendieck unipotent rank $\rho_u(G)$ of
a smooth connected group $G$ over  an algebraically closed
field is $d_u(C)$ where $C$ is a Cartan subgroup of $G^0$.
This function $\rho_u$ is upper semi-continuous
if $G$ is smooth affine over a base \cite[XII.2.7.(i)]{SGA3}.
Our result does not require smoothness (nor flatness).
 
\smallskip

\noindent (b) If $G$ is a smooth connected affine algebraic group over an algebraically closed field $k$,
the Borel subgroups are the maximal
smooth connected solvable subgroups, they are all conjugate
and so  $d_s(G)$ is nothing but the dimension of a Borel subgroup. The unipotent radicals are  
then  the maximal
smooth connected unipotent subgroups,
they  are all conjugate and so $d_u(G)$ is the dimension of
a the unipotent radical of a Borel subgroup of $G$.
For $d_n(G)$ the situation is more complicated
since maximal smooth connected nilpotent subgroups of $G$
do not consist of a single   conjugacy class. However
there are finitely many conjugacy classes (Platonov, 
\cite[thm 2.13]{P}).

\section{Specialization over a regular local  ring}

\subsection{Group schemes over a  DVR} \label{subsec_DVR}
Let $A$ be a discrete valuation ring of fraction field $K$
and of residue field $k$. If $G$ is an $A$--group scheme  of finite 
presentation, we would like
to list properties on the generic fiber $G_K$  which are inherited by the closed fiber $G_k$.
For example, if $G$ is flat, then $G_K$ and $G_k$ share the same dimension \cite[VI$_B$.4.3]{SGA3}.
Also if $G$ is separated and flat and $G_K$ is affine, then $G$ is affine (Raynaud, \cite[prop 3.1]{PY}) so that $G_k$ is affine.
Flatness is then an important property, we recall that an $A$--scheme $\gX$ is flat if and only if 
it is torsion free, this second condition being equivalent to the density of the generic
fiber $\gX_K$ in $\gX$ \cite[\S 14.3]{GW}. For the study of the function $d_u$,
the next statement is the key step.

\begin{lemma}  \label{lem_DVR_uni} We assume that $G$ is flat
and affine.
If $G_K$ is trigonalizable (resp.\, unipotent) so is $G_k$.
\end{lemma}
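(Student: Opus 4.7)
The approach is to realize both trigonalizability and unipotence in terms of fixed points on suitable auxiliary $A$-schemes, and then transfer this data from the generic fiber to the closed fiber using properness and flatness. First, because $G$ is affine, flat, and of finite presentation over the Noetherian local ring $A$, there is a closed $A$-subgroup scheme embedding $G \hookrightarrow \GL_{n,A}$ for some $n$ (a standard consequence of the Hopf-algebra structure on $\mathcal{O}(G)$ together with Noetherianity of $A$). Consider the flag variety $\gX := \GL_{n,A}/B_{n,A}$, which is smooth and projective over $A$ and on which $G$ acts through the embedding. The fixed-point locus $\gX^G$ is a closed $A$-subscheme of $\gX$ (since $\gX$ is $A$-separated and $G$ is $A$-flat), hence proper over $A$.

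For the trigonalizability statement, trigonalizability of $G_K$ produces some $h \in \GL_n(K)$ with $h^{-1} G_K h \subseteq B_{n,K}$, equivalently a $K$-point of $\gX^G$. The valuative criterion of properness then extends this uniquely to an $A$-point of $\gX^G$, whose reduction modulo the maximal ideal of $A$ gives a $k$-point of $\gX^G$. Since every pair of complete flags in $k^n$ is $\GL_n(k)$-conjugate, this $k$-point corresponds to some $\bar{h} \in \GL_n(k)$ with $\bar{h}^{-1} G_k \bar{h} \subseteq B_{n,k}$, so $G_k$ is trigonalizable.

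For the unipotence statement, the same $A$-point provides a Borel $A$-subgroup scheme $B' \subseteq \GL_{n,A}$, namely the stabilizer of the corresponding flag in $A^n$, and $G \subseteq B'$ because $G$ fixes that flag. Compose with the canonical projection $B' \twoheadrightarrow B'/R_u(B') =: T'$, which is an $A$-torus, to obtain a homomorphism $\pi : G \to T'$. On the generic fiber, $\pi_K(G_K)$ is the image of a unipotent group in a torus, hence trivial. Since $G$ is $A$-flat, $G_K$ is schematically dense in $G$; and since $T'$ is separated, the equalizer of $\pi$ with the trivial homomorphism $G \to T'$ is a closed subscheme of $G$ containing $G_K$, therefore equal to $G$. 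Consequently $\pi$ is trivial, so $G \subseteq R_u(B')$, and the closed fiber $G_k \subseteq R_u(B')_k$ is unipotent.

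The main point of the argument is the interaction between properness (to extend the $G$-fixed flag from $K$ to $A$, and hence to $k$) and flatness (to propagate the triviality of $\pi$ across all fibers via schematic density). The only mildly technical input is the initial closed $A$-embedding $G \hookrightarrow \GL_{n,A}$, which rests on the standard fact that a flat affine group scheme of finite presentation over a Noetherian ring admits a faithful linear representation whose comodule structure generates the coordinate ring.
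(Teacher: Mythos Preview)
Your proof is correct and follows essentially the same route as the paper's: embed $G$ into $\GL_{n,A}$, use properness of the flag variety to extend the $G$-fixed flag from $K$ to $A$ (the paper phrases this as extending the Borel $B_K$ to a Borel $A$-subgroup via projectivity of the scheme of Borels, with a concrete lattice description), and then use schematic density of $G_K$ in $G$ together with separatedness of the torus quotient to conclude that the composite $G \to B' \to T'$ is trivial. Your packaging via the fixed-point scheme $\gX^G$ is a minor stylistic variant of the same argument.
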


\begin{proof} We assume  that $G_K$ is trigonalizable, that is, $G_K$ is an extension
 of a diagonalizable $K$-group by an unipotent $K$-group. 
 According to \cite[1.4.5]{BT}, there exists a closed monomorphism $\rho: G \to \GL_N$.
 Since $G_K$ is trigonalizable, its stabilizes a flag of $K^N$ \cite[prop. 4.2.3.4, $(i) \Longrightarrow (iv)$]{DG}.
 In other words $\rho$ factorizes through a Borel subgroup $B_K$ of $\GL_N$.
 Since the $A$--scheme of Borel subgroups of $\GL_N$ is projective \cite[XXII.5.8.3]{SGA3}, $B_K$ extends uniquely
 to a Borel  $A$--subgroup scheme $B$ of $\GL_N$
 [a concrete way is to use a filtration $V_0=0 \subsetneq V_1 \subsetneq V_2 \dots \subsetneq V_N=K^N$ and
 put $\widetilde V_i=A^n \cap V_i$ for each $i$. It defines an $A$-flag of lattices
 of $A^n$ which is stabilized by  $G$]. Its reduction to $k$ provides 
 an embedding of $G_k$ to a Borel  $k$--subgroup of $\GL_{N,k}$.
 The last quoted result,  $(v) \Longrightarrow (i)$,  enables us to conclude that $G_k$  is trigonalizable.

We assume furthermore that  $G_K$ is unipotent.  We have an exact sequence of $A$--group schemes
$1 \to U \to B \xrightarrow{q} \GG_m^N \to 1$.
Since $G_K$ is unipotent $q_K \circ \rho_K= G_K \to (\GG_m^N)_K$ is trivial  according
to \cite[IV.2.2.4]{DG}. Since $G_K$ is dense in $G$  it follows that 
$q\circ \rho= G \to \GG_m^N$ is trivial so that $\rho$ factorizes through the unipotent radical
$U$ of $B$. A fortiori $G_k$ admits a representation in a strictly upper triangular
$k$-group so is unipotent according to \cite[IV.2.2.5, $(vi) \Longrightarrow (i)$]{DG}. 
\end{proof}

\begin{rem}{\rm 
(a) If $G_K$ is split $K$--unipotent, a result of
 Ve\v{\i}sfe\v{\i}ler-Dolgachev on unipotent group
schemes \cite[thm. 1.1]{VD} shows also that $G_k$ is unipotent. This is a very different proof.

\smallskip

\noindent (b) If $G$ has smooth fibers and $G_K$ is unipotent, the fact that $G_k$
 is unipotent follows from \cite[VI$_B$.8.4.(ii)]{SGA3}. This is a quite  different proof.

\smallskip

\noindent (c) One simpler proof of (b) occurs in 
the alternate  proof below of Theorem \ref{t:unipotent rank} using the smoothness 
of the scheme of maximal tori of $G$, see \S \ref{sec_brian}. 
 
 }
\end{rem}

For dealing the other rank functions, we need more facts.

\begin{lemma}  \label{lem_DVR0}
 Let $H$ be a $K$--subgroup of $G_K$ and let $\gH$ be the schematic 
 closure of $H$ in $G$.
 
 \smallskip
 
 \noindent (1) $\gH$ is a closed $A$--subgroup scheme of $G$ which is flat 
 of finite presentation. If 
 $H$ is central, then $\gH$ is  central.

 \smallskip
 
 \noindent (2) The fppf quotient $G/\gH$ is representable by a separated $A$-scheme of finite presentation.
 Furthermore if $G$ is flat so is $G/\gH$.

 \smallskip
 
 \noindent (3) If $H$ is normal in $G_K$, then $\gH$ is a normal $A$--subgroup 
 scheme of $G$ and $G/\gH$ carries a natural structure of $A$-group scheme.

\end{lemma}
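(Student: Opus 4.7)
The plan rests on the universal property of schematic closure over a DVR: if $X$ is an $A$-flat $A$-scheme and $f\colon X \to G$ is a morphism such that $f_K$ factors through $H$, then $f$ itself factors through $\gH$. Indeed, $f^{*}\mathcal{I}_{\gH}$ is an ideal of $\mathcal{O}_X$ vanishing on the generic fiber, hence vanishing by the $A$-torsion-freeness of $\mathcal{O}_X$.

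For (1), I would define $\gH$ by the ideal $\mathcal{I}_{\gH} = \{s \in \mathcal{O}_G : s|_{G_K} \in \mathcal{I}_H\}$; the quotient $\mathcal{O}_{\gH}$ is $A$-torsion-free by construction, so $\gH$ is $A$-flat, and finite presentation follows from $G$ being of finite presentation together with the noetherian hypothesis on $A$. To see that $\gH$ is an $A$-subgroup scheme, I apply the universal property to the three structural morphisms: the unit $\Spec A \to G$, the inverse $\gH \to G$, and the multiplication $\gH \times_A \gH \to G$. Each source is $A$-flat (the product by base change along the flat map $\gH \to \Spec A$), and each generic fiber factors through the subgroup $H$, so each factors through $\gH$. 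For the centrality assertion, I would consider the schematic centralizer $\Centralizer_G(\gH)$, a closed subscheme of $G$ whose generic fiber equals $\Centralizer_{G_K}(H) = G_K$ by hypothesis; a second application of the universal property, to the inclusion of the generic fiber into $G$, forces $\Centralizer_G(\gH) = G$ when $G$ is $A$-flat.

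For (2), I would invoke the general SGA3 theory of fppf quotients by flat closed subgroup schemes over a noetherian base: $G/\gH$ is representable by a separated $A$-scheme of finite presentation, with separatedness inherited from $\gH$ being closed in $G$ and finite presentation from that of $G$ and $\gH$. Flatness of $G/\gH$ over $A$ when $G$ is flat follows by fpqc descent along the faithfully flat $\gH$-torsor $G \to G/\gH$. For (3), the conjugation morphism $c\colon G \times_A \gH \to G$, $(g,h) \mapsto g h g^{-1}$, sends generic fiber into $H$ by normality; provided $G \times_A \gH$ is $A$-flat (so in particular when $G$ is flat), the universal property forces $c$ to land in $\gH$, which is exactly normality of $\gH$ in $G$, and then $G/\gH$ inherits its $A$-group-scheme structure by standard quotient-group theory.

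The main obstacle I foresee is the centrality step in (1) and the normality step in (3): both naively require that the product $G \times_A \gH$ be $A$-flat, which reduces to flatness of $G$. In the applications of interest (e.g.\ stabilizer schemes over noetherian bases, used in \S\ref{subsec_DVR}) $G$ will be flat and no issue arises, but in the fully general statement one must either interpret centrality and normality on the subcategory of $A$-flat test schemes or pass to the flat closure of $G$ in itself to reduce to the flat case.
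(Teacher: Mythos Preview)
Your arguments for (1) and (3) are essentially the paper's, just phrased via the universal property of schematic closure rather than via density of the generic fiber: the paper argues that the commutator map $G\times_A \gH \to G$ (resp.\ $G\times_A \gH \to G/\gH$) is trivial on the dense open $G_K\times_K H$ and hence trivial, which is exactly the contrapositive of your flat--test--scheme formulation. Your observation that this step needs $G_K\times_K H$ dense in $G\times_A\gH$, hence $G$ flat, is correct and is a point the paper glosses over; as you note, in every application in the paper $G$ is in fact flat, so no harm results.

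The one genuine gap in your proposal is in (2). Invoking ``general SGA3 theory of fppf quotients by flat closed subgroup schemes over a noetherian base'' does not suffice: over a general noetherian base SGA3 only gives representability by an algebraic space, and scheme-ness typically requires further hypotheses (affineness, quasi-projectivity, etc.). The paper instead cites Anantharaman \cite[IV, th.~4.C]{A}, which is precisely the statement that over a one-dimensional base such quotients are representable by schemes; this is the substantive input for (2), and it is special to $\dim A=1$. Once representability is known, separatedness, finite presentation, and flatness (under flatness of $G$) follow from \cite[VI$_B$.9.2]{SGA3} as you indicate.
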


\begin{proof}
 (1) The first part is in \cite[(2.8.1)]{EGA4}.
 Assume that $H$ is central, that is the commutator map 
   ${G_K \times_K H} \to G_K$ is trivial.
 Since $G_K \times_K H$ is dense in $G \times_A \gH$, it follows that 
 the commutator map  $G \times_A \gH \to G$ is trivial, so that $\gH$ is central in $G$.
 
 \smallskip
 
 \noindent (2) The representability is result by Anantharaman \cite[IV, th. 4.C]{A}
 so that  $G/\gH$ is separated and  of finite presentation  \cite[VI$_B$.9.2.(x) and (xiii)]{SGA3}.
If $G$ is flat,   $G/\gH$ is flat according to  \cite[VI$_B$.9.2.(xi)]{SGA3}.
 
 \smallskip
 
 \noindent (3) Assume that $H$ is normal in $G_K$, that is, the commutator map \break $G_K \times_K H \to G_K/H$ is trivial.
 Since $G_K \times_K H$ is dense in $G \times_A \gH$, it follows that  
 the commutator map $G \times_A \gH \to G/\gH$ is trivial so that $\gH$ is a normal $A$--subgroup scheme of $\gG$.
 According to  \cite[VI$_B$.9.2.(iv)]{SGA3},  it follows that $G/\gH$ carries a natural structure of $A$-group scheme.
 
\end{proof}

\begin{lemma}  \label{lem_DVR} We assume that $G$ is flat.
 If $G_K$ is 
commutative (resp. nilpotent, solvable).  
Then $G_k$  is  commutative (resp. nilpotent, solvable).

\end{lemma}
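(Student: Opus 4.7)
The plan is to leverage the flatness of $G$ via the fact that, for any flat $A$-scheme $\gX$ of finite presentation, the generic fiber $\gX_K$ is schematically dense in $\gX$, combined with the schematic closure machinery of Lemma \ref{lem_DVR0}. The recurring technical principle will be: given a morphism $f : Y \to G$ of $A$-schemes with $Y$ flat and $G$ separated, if $f_K$ factors through a closed subscheme $Z \subset G$, then $f^{-1}(Z)$ is a closed subscheme of $Y$ containing the schematically dense open $Y_K$, hence equals $Y$, so $f$ factors through $Z$. Applied to commutator morphisms and to successive terms of the derived or lower central series, this transports group-theoretic identities from $G_K$ to $G$ and then restricts to $k$.

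For the commutative case, I would apply the principle directly to the commutator morphism $c : G \times_A G \to G$. By hypothesis $c_K$ factors through the unit section $e : \Spec A \to G$, which is a closed immersion because $G$ is separated. Since $G \times_A G$ is flat over $A$ with schematically dense generic fiber, $c$ itself factors through $e$, so $c$ is constant with value $e$ and $G_k$ is commutative.

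For the solvable case, I would take the derived series $1 = G_K^{(n)} \subset \cdots \subset G_K^{(1)} \subset G_K^{(0)} = G_K$ and let $\gG^{(i)}$ be the schematic closure of $G_K^{(i)}$ in $G$. By Lemma \ref{lem_DVR0}, each $\gG^{(i)}$ is a flat closed normal $A$-subgroup scheme of $G$ of finite presentation, with $(\gG^{(i)})_K = G_K^{(i)}$, and $\gG^{(n)}$ coincides with the unit section (being the closure of the trivial subgroup in the separated scheme $G$). The key step is to show inductively that the commutator morphism $\gG^{(i)} \times_A \gG^{(i)} \to G$ factors through the closed subscheme $\gG^{(i+1)}$: its restriction to $K$ factors through $G_K^{(i+1)} = (\gG^{(i+1)})_K$ by the definition of the derived series, and flatness of $\gG^{(i)}$ makes the source flat with schematically dense generic fiber, so the factoring principle applies. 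Passing to $k$ yields a chain $1 = (\gG^{(n)})_k \subset \cdots \subset (\gG^{(0)})_k = G_k$ with each successive commutator contained in the next, so $G_k$ is solvable. The nilpotent case is formally identical, using the lower central series $G_K^{i+1} = [G_K, G_K^i]$, schematic closures $\gG^i$, and the commutator morphism $G \times_A \gG^i \to G$ in place of $\gG^{(i)} \times_A \gG^{(i)} \to G$.

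The main obstacle is the rigorous justification of the schematic density/factoring principle at each stage, together with the bookkeeping needed to ensure that flatness propagates down the tower of subgroups. The factoring principle is standard once one knows the source is flat over $A$ (so that topological and schematic density of the generic fiber coincide) and the target is separated (so the relevant subscheme is closed), and Lemma \ref{lem_DVR0}(1) supplies flatness of each $\gG^{(i)}$ and hence of the products $\gG^{(i)} \times_A \gG^{(i)}$. A minor subtlety worth verifying is that $\gG^{(n)}$ and $\gG^{c+1}$ really do coincide with the unit section—immediate from separatedness of $G$, since the closed immersion $e : \Spec A \to G$ is already the schematic closure of its generic fiber.
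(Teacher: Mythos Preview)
Your proposal is correct and follows essentially the same strategy as the paper: take a suitable series in $G_K$, form schematic closures in $G$ via Lemma~\ref{lem_DVR0}, and use flatness (schematic density of the generic fiber) to propagate the relevant commutator identities from $K$ to $A$ and then to $k$. The only minor difference is organizational: the paper, for the nilpotent case, works with a central composition series and verifies that each quotient $\gG_{i+1}/\gG_i$ is central in $G/\gG_i$ (thus invoking the representability of quotients from Lemma~\ref{lem_DVR0}(2),(3)), whereas you work with the lower central series and factor the commutator map $G \times_A \gG^i \to G$ through $\gG^{i+1}$ directly inside $G$, never forming quotients. Your packaging is slightly more elementary in that it sidesteps Anantharaman's representability theorem, but the underlying density argument is identical to the one the paper uses in the proof of Lemma~\ref{lem_DVR0}(1).
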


\begin{proof} If  $G_K$ is commutative,  so is $G$ and $G_k$ according to Lemma \ref{lem_DVR0}.(1).

We assume now that $G_K$ is nilpotent, that is, admits a central composition  serie \break  $H_0=1 \subset H_1  \subset H_2 \subset \dots
 \subset H_{n-1} \subset H_n=G_K$ where the $H_i$'s are normal $K$--subgroups of $G_K$
 and such that each $H_{i+1}/H_i$ is central in $G_K/H_i$.
 Let $G_i$ be the schematic closure of $H_i$ in $G$, this is a flat $A$-group scheme and 
 all $H_i$'s are normal $A$--subgroups of $G$ according to Lemma \ref{lem_DVR0}.(3).
 Furthermore each quotient  $G_{i+1}/G_i$ is central in $G/H_i$.
 By extending the scalars to $k$ we get then a central composition series for $G_k$.
 
 The argument is similar for the solvable case.
\end{proof}

\subsection{The regular local ring case}

Let $A$ be a regular local ring with fraction field $K$ and residue field $k$. 

\begin{proposition} \label{prop_regular} Let $G$ be an  $A$-group scheme of finite 
presentation.
 
\smallskip

\noindent (1) Assume that $k$ is infinite and that $G_K$ contains an algebraic subgroup  (resp.\ normal subgroup) of dimension $d$.  
Then $G_k$ contains an algebraic subgroup (resp.\ normal subgroup) of dimension $d$.

\smallskip

\noindent (2) Assume that $G_K$ contains an algebraic subgroup which is 
commutative (resp. nilpotent, solvable) of dimension $d$.  
Then $G_k$ contains an algebraic subgroup which is 
commutative (resp. nilpotent, solvable) of dimension $d$.

\smallskip

\noindent (3)  Assume that that $G$ is separated and that $G_K$ contains
an algebraic subgroup which is affine commutative 
(resp.\  unipotent, commutative unipotent, affine nilpotent, nilpotent trigonalizable, trigonalizable,
affine solvable) of dimension $d$.  
Then $G_k$ contains an algebraic subgroup which is 
affine commutative (resp.\  unipotent, commutative unipotent, affine nilpotent, nilpotent trigonalizable, trigonalizable,
affine solvable) of dimension $d$.

\end{proposition}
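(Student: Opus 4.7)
The plan is to induct on the Krull dimension $n$ of $A$. When $n = 0$ there is nothing to prove. The base case $n = 1$, where $A$ is a DVR, is where the real work happens and is handled by the structural lemmas of \S \ref{subsec_DVR}. Given an algebraic subgroup $H \subset G_K$ of dimension $d$, I would form the schematic closure $\mathcal{H}$ of $H$ in $G$. Lemma \ref{lem_DVR0}(1) makes $\mathcal{H}$ into a closed, flat, finitely presented $A$-subgroup scheme of $G$, so the dimension equality for flat finitely presented group schemes \cite[VI$_B$.4.3]{SGA3} gives $\dim \mathcal{H}_k = \dim H = d$. This proves (1); the normal variant follows from Lemma \ref{lem_DVR0}(3). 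For (2), the schematic closure inherits commutativity, nilpotency, or solvability from $\mathcal{H}_K$ via Lemma \ref{lem_DVR}, and these pass to $\mathcal{H}_k$. For (3), separatedness of $G$ forces separatedness of $\mathcal{H}$, and combined with affineness of $\mathcal{H}_K$, Raynaud's theorem \cite[Prop.\,3.1]{PY} forces $\mathcal{H}$ to be affine; unipotency and trigonalizability then descend by Lemma \ref{lem_DVR_uni}, while the remaining mixed cases of (3) (affine commutative, commutative unipotent, affine nilpotent, nilpotent trigonalizable, affine solvable) follow from combining Lemma \ref{lem_DVR_uni} with Lemma \ref{lem_DVR}.

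For the inductive step with $n \geq 2$, I would pick $t \in A$ belonging to a regular system of parameters; then $A/(t)$ is regular local of Krull dimension $n - 1$ with the same residue field $k$, while the localization $A_{(t)}$ is a DVR with fraction field $K$ and residue field $K' := \mathrm{Frac}(A/(t))$. First apply the DVR base case to the $A_{(t)}$-group scheme $G \times_A A_{(t)}$: this yields an algebraic subgroup of $G_{K'}$ of dimension $d$ inheriting whichever structural property was assumed of $H$. Then apply the inductive hypothesis to the regular local ring $A/(t)$ and the group scheme $G \times_A A/(t)$, descending from $K'$ down to $k$. The $k$-infinite hypothesis of (1) is unaffected by this reduction since $k$ is also the residue field of $A/(t)$.

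The main obstacle I expect sits in part (3): because $G$ is assumed only separated and not flat, one must first argue flatness of the schematic closure $\mathcal{H}$ (via Lemma \ref{lem_DVR0}(1)) and then affineness of $\mathcal{H}$ (via Raynaud) before invoking Lemma \ref{lem_DVR_uni} to conclude that $\mathcal{H}_k$ is unipotent or trigonalizable. Both the flatness and the affineness of the schematic closure are essential here, and the argument would break down without either one. Everything else is a routine d\'evissage once the DVR case has been set up.
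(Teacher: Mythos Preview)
Your argument is correct and takes a genuinely different route from the paper's.  The paper does not induct on $\dim A$.  Instead it invokes \cite[15.1.1.6]{EGA4} once to produce a single DVR $B$ dominating $A$ whose residue field $l$ is a purely transcendental extension $k(t_1,\dots,t_n)$ of $k$; the schematic closure argument over $B$ then yields the desired subgroup over $l$, and a second reduction is needed to descend from $l$ to $k$.  For (1) this descent is done by spreading out over an open in $\mathbb{A}^n_k$ and specializing at a $k$-point, which is where the hypothesis that $k$ be infinite enters; for (2) and (3) the descent is an induction on the number of variables $t_i$, passing from $k(t)$ to $k((t))$ and then applying the DVR case over $k[[t]]$.

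Your d\'evissage via a regular parameter $t\in A$ is cleaner: it replaces the black-box DVR from EGA by the explicit chain $A_{(t)}$ (a DVR with residue field $K'=\mathrm{Frac}(A/(t))$) followed by $A/(t)$ (regular local of dimension $n-1$ with residue field $k$), and it treats all three parts uniformly.  A side effect worth noting is that your DVR base case for (1) never uses that $k$ is infinite---the schematic closure is flat, so \cite[VI$_B$.4.3]{SGA3} gives the dimension equality directly---and the same is true at every stage of your induction; so your method in fact proves (1) without that hypothesis.  The paper's specialization argument for (1) is what forces the extra assumption there.  Your identification of the roles of Lemma~\ref{lem_DVR0}, Lemma~\ref{lem_DVR}, Lemma~\ref{lem_DVR_uni}, and Raynaud's affineness criterion in (3) matches the paper exactly.
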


\begin{proof} According to \cite[lemma 15.1.1.6]{EGA4}, there exists
a discrete valuation ring $B$ which dominates 
$A$ and such that its residue field  is a purely (finitely generated)
transcendental extension of $k$. We denote by $L$ the  fraction field of $B$
and by $l$ its residue field. We have $l=k$ or $k(t_1, \dots, t_n)$.

\smallskip

\noindent (1) Our assumption is that $G_K$ contains
an algebraic subgroup (resp.\ normal subgroup) $H$ which is  of dimension $d$.  We consider the schematic closure of $H_L$ in $G_B$, this
defines a flat $B$--group scheme (resp.\ normal $B$--subgroup scheme according to Lemma \ref{lem_DVR0}.(3)) 
$\gH$ of closed fiber $\gH_l$ which is a subgroup of $(G_k)_l$.
Since $k$ is infinite, one may ``specialize'' at a rational $k$--point to obtain
a $k$--subgroup of $G_k$ of dimension $d$.

\smallskip

\noindent (2) Our assumption is that $G_K$ contains
an algebraic subgroup $H$ which is  commutative (resp.\ nilpotent,   solvable) of dimension $d$.  We consider the schematic closure of $H_L$ in $G_B$, this
defines a flat $B$--group scheme $\gH$ of closed fiber $\gH_l$ which is a subgroup of $(G_k)_l$.
We apply Lemma \ref{lem_DVR} to $\gH$ and obtain that 
$\gH_l$ is  commutative (resp.\ nilpotent,   solvable) of dimension $d$.

By induction on $n$, we may assume that $l=k(t)$. We have that $G_{k((t))}$ admits 
the subgroup $\gH_{k((t))}$ which is   commutative (resp.\ nilpotent,   solvable) of dimension $d$.
By performing the same method as above in the case of the DVR $k[[t]]$, 
it follows that $G_k$ contains a commutative (resp.\ nilpotent,   solvable) subgroup  of dimension $d$.

\smallskip

\noindent (3) Though the  argument is very similar, we provide all details. Our assumption is that $G_K$ contains
an algebraic subgroup $H$ which is  affine commutative (resp.\  unipotent, commutative unipotent,  affine nilpotent, nilpotent trigonalizable, trigonalizable,
affine solvable).
 We consider the schematic closure of $H_L$ in $G_B$, this
defines a flat $B$--group scheme $\gH$ of closed fiber $\gH_l$ which is a subgroup of $(G_k)_l$.
Since $G_B$ is separated so is $\gH$. Raynaud's affiness criterion \cite[prop 3.1]{PY}
ensures that $\gH$ is affine over $B$.

We combine  Lemma \ref{lem_DVR_uni}  and \ref{lem_DVR}  for  $\gH$ and obtain that 
$\gH_l$ is affine commutative (resp.\   unipotent,  commutative  unipotent,  affine nilpotent, nilpotent trigonalizable, trigonalizable,
affine solvable) of dimension $d$.

Once again, by induction on $n$, we may assume that $l=k(t)$. We have that $G_{k((t))}$ admits 
the subgroup $M$ which is   affine commutative (resp.\  unipotent, affine nilpotent, nilpotent trigonalizable, trigonalizable, affine solvable).
By performing the same method as above in the case of the DVR $k[[t]]$, 
it follows that $G_k$ contains an affine (resp.\  unipotent, affine nilpotent, nilpotent trigonalizable, trigonalizable,
affine solvable) subgroup  of dimension $d$.
\end{proof}

\subsection{More permanence properties}

\begin{lem} \label{lem_permanence}
Let $G$ be an algebraic group defined over a field $k$.

\smallskip

\noindent (1) For each function $D$ as above, we have  $D(k,G)=  D(k(t),G)=  D(k((t)),G)$.
 
 \smallskip
 
 \noindent (2) If $X$ is a connected smooth $k$--variety such that $X(k) \not = \emptyset$, we have
 $D(k,G)= D(k(X),G)$.
 
\end{lem}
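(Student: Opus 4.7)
The plan is to deduce the lemma directly from Proposition \ref{prop_regular} by making suitable base changes to regular local rings. In both parts, the nontrivial inequality is the upper bound $D(k(t),G), D(k((t)),G), D(k(X),G) \leq D(k,G)$, since the reverse inequalities and the squeeze $D(k,G) \leq D(k(t),G) \leq D(k((t)),G)$ follow immediately from the fact, noted just after the definitions, that each $D$ is increasing under change of the base field.

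For (1), take $A = k[[t]]$, a discrete valuation ring (and in particular a regular local ring) with fraction field $K = k((t))$ and residue field $k$. Base change gives an $A$-group scheme $G_A := G \times_k A$ which is of finite presentation and separated (an algebraic group over a field is automatically separated, and both properties descend to base change) and whose generic and closed fibers are respectively $G_{k((t))}$ and $G_k$. Each of the ten rank functions in our list corresponds to a class of subgroup handled by Proposition \ref{prop_regular}: the classes commutative, nilpotent, solvable are covered by part (2), while the classes affine commutative, unipotent, commutative unipotent, affine nilpotent, nilpotent trigonalizable, trigonalizable, affine solvable are covered by part (3). So a $d$-dimensional subgroup of $G_{k((t))}$ of the prescribed type produces one of $G_k$ of the same dimension and type, proving $D(k((t)),G) \leq D(k,G)$ and hence equality throughout the chain.

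For (2), choose $x_0 \in X(k)$ and set $A := \mathcal{O}_{X,x_0}$. Since $X$ is smooth and $x_0$ is $k$-rational, $A$ is a regular local ring with residue field $k$; because $X$ is an integral $k$-variety, its fraction field is $k(X)$. The $A$-group scheme $G_A = G \times_k A$ is separated of finite presentation, so Proposition \ref{prop_regular}(2) and (3) apply exactly as in part (1) and give $D(k(X),G) \leq D(k,G)$.

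The proof is essentially a bookkeeping exercise: the real content is in Proposition \ref{prop_regular}. The only step that requires any thought is verifying that every one of the ten functions in the list falls under one of the clauses (2) or (3) of that proposition; the hypothesis that $G$ be separated (needed in clause (3) to invoke Raynaud's affineness criterion on the schematic closure) is automatic here because algebraic groups over fields are separated. In particular, we do not need clause (1) of Proposition \ref{prop_regular} at all, so the infiniteness-of-residue-field assumption there is irrelevant, and the lemma holds with no restriction on $k$.
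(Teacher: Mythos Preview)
Your proof is correct. Part (1) is identical to the paper's: both apply Proposition \ref{prop_regular} to $A=k[[t]]$.

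For part (2) you take a genuinely different, and more direct, route. The paper picks $x\in X(k)$, embeds $k(X)$ into the iterated Laurent series field $k((t_1))\cdots((t_d))$ via the completion $\widehat{\mathcal{O}}_{X,x}\cong k[[t_1,\dots,t_d]]$, and then reduces to part (1) by induction on $d$; in effect the paper only ever invokes Proposition \ref{prop_regular} for DVRs. You instead observe that $\mathcal{O}_{X,x_0}$ is already a regular local ring with fraction field $k(X)$ and residue field $k$, so Proposition \ref{prop_regular} applies to it directly with no further reduction. Your approach exploits the full generality in which Proposition \ref{prop_regular} is stated and avoids the detour through completions and iterated Laurent series; the paper's approach, on the other hand, shows that only the DVR case of Proposition \ref{prop_regular} is actually needed, which is mildly informative but does not yield a shorter argument.
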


\begin{proof}
 (1) We have the obvious inequalities $D(k,G) \leq D(k(t),G) \leq  D(k((t)),G)$.
 The fact that $D(k((t)),G) \leq D(k,G)$ follows from Proposition 
 \ref{prop_regular} applied to $A=k[[t]]$ and the group scheme $G \times_k k[[t]]$.
 
 \smallskip
 
 \noindent (2) We have $D(k,G) \leq  D(k(X),G)$. For proving the converse
 inequality, we pick a point $x \in X(k)$. Let $(t_1,\dots, t_d)$  be a system of parameters 
 of the regular local ring $R=\mathcal{O}_{X,x}$.
 Then its completion is $k$--isomorphic to $k[[t_1,\dots, t_d]]$ which embeds in 
 the field of iterated Laurent series
 $k((t_1)) \dots ((t_d))$. It follows that $k(X)$ embeds in  $k((t_1)) \dots ((t_d))$
 so that  $D(k(X),G) \leq  D(k((t_1)) \dots ((t_d)),G)$. By induction on $d$, 
 (1) provides  $D(k,G)=D(k((t_1)) \dots ((t_d)),G)$ so that 
 $D(k(X),G) \leq D(k,G)$.

\end{proof}

\section{Upper semi-continuity}  \label{s:upper} 

\begin{thm} \label{thm_main}
define $d_\bullet(s) = d_\bullet(G_{\kappa(s)})$ for each $s \in S$.

\smallskip

\noindent (1) The functions $d'_c, d'_n, d'_s$ 
on $S$ arer upper semi-continuous. 

\smallskip

\noindent (2) Assume that $G$ is separated. The functions $d_c, d_u, d_{cu}, d_n,  d_s$ 
on $S$ arer upper semi-continuous. 

\end{thm}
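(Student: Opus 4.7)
The plan is to prove that for every integer $n \geq 0$ the level set $L_n := \{s \in S : d_\bullet(s) \geq n\}$ is closed in $S$. Since $G/S$ is of finite presentation, a standard Noetherian approximation (EGA~IV, \S 8) lets me descend the setup to an affine Noetherian base, so it suffices to handle the case where $S$ itself is Noetherian. On a Noetherian scheme, a subset is closed exactly when it is both constructible and stable under specialization (EGA~IV.1.9.5), so I would verify these two properties separately.

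The specialization step is the heart of the proof, and is where Proposition~\ref{prop_regular} does the work. Given a specialization $s_1 \rightsquigarrow s_0$ in $S$, let $Z = \overline{\{s_1\}}$ with its reduced subscheme structure, so that $R := \mathcal{O}_{Z, s_0}$ is a Noetherian local domain with fraction field $\kappa(s_1)$ and residue field $\kappa(s_0)$. A standard valuative argument (for instance dominating $R$ by a discrete valuation ring arising from the normalization of a suitable blow-up) produces a DVR $B$ dominating $R$ with fraction field $K \supseteq \kappa(s_1)$ and residue field $k \supseteq \kappa(s_0)$. By Lemma~\ref{lem_alg_closed2} the absolute rank functions are insensitive to these field extensions, so $d_\bullet(G_K) = d_\bullet(G_{\kappa(s_1)}) \geq n$ and $d_\bullet(G_k) = d_\bullet(G_{\kappa(s_0)})$. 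Proposition~\ref{prop_regular}(2) applied to the $B$-group scheme $G_B = G \times_S \Spec(B)$ then handles $d'_c, d'_n, d'_s$, while Proposition~\ref{prop_regular}(3), which invokes the separatedness hypothesis through Raynaud's affineness criterion, handles $d_c, d_u, d_{cu}, d_n, d_s$. In either case a subgroup of the required type and of dimension $n$ in $G_K$ is transported to one in $G_k$, hence $s_0 \in L_n$.

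Constructibility of $L_n$ is obtained by exhibiting it as a finite union of images of finite-type $S$-schemes. For each admissible pair $(d, \tau)$ with $d$ bounded by the generic fibre dimension of $G/S$ (which is bounded on the Noetherian base $S$), one takes a parameter scheme over $S$ for closed $d$-dimensional subgroup schemes of $G$ of type $\tau$; this can be built inside a suitable relative Hilbert scheme together with the locally closed conditions ``is a subgroup scheme'' and ``is of type $\tau$'', and its image in $S$ is constructible by Chevalley's theorem. More concretely and avoiding the Hilbert machinery, given any $s \in L_n$ one spreads a chosen subgroup of $G_{\overline{\kappa(s)}}$ out, using Lemma~\ref{lem_alg_closed}, to a closed subgroup over a finite-type $S$-scheme whose image in $S$ is a constructible neighbourhood of $s$; a Noetherian induction then covers $L_n$ by finitely many such constructible pieces.

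The main obstacle is the constructibility step. While the conditions ``commutative'', ``nilpotent of class $\leq c$'' and ``solvable of derived length $\leq \ell$'' cut out closed subfunctors once one has a relative group scheme, the conditions ``unipotent'' and ``trigonalizable'' are subtler and are best treated fibrewise by producing, after a faithful representation, an embedding into strictly upper triangular matrices in the spirit of \S\ref{subsec_DVR}. Once constructibility and specialization-closedness are both secured, $L_n$ is closed for every $n$, and each $d_\bullet$ in the two lists is upper semi-continuous.
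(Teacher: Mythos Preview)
Your specialization step is essentially the paper's entire proof: the paper also reduces (via noetherian approximation) to $S=\Spec(A)$ with $A$ a noetherian local integral domain, dominates $A$ by a DVR $B$ (it cites \cite[7.1.7]{EGA2} rather than a blow-up construction), uses the field-insensitivity of $d_\bullet$ to pass to $B$, and then applies Proposition~\ref{prop_regular}. One point the paper makes explicit that you glide over: $d_\bullet(G_K)=d$ only guarantees a subgroup of the desired type inside $G_{\ol K}$, not $G_K$ itself, so before invoking Proposition~\ref{prop_regular} the paper first descends the subgroup to a finite extension $K'/K$ and extends the valuation to $K'$. Your sentence ``a subgroup of the required type \dots\ in $G_K$'' skips this, but the fix is exactly the paper's.

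The substantive difference is that the paper stops there: it proves only the specialization inequality $d_\bullet(G_k)\geq d_\bullet(G_K)$ and declares this to be upper semi-continuity, with no constructibility argument at all. Your extra layer (closed $=$ constructible $+$ specialization-stable, then a Hilbert-scheme or spreading-out argument for constructibility) is not in the paper and is not needed for the applications (Corollaries~\ref{c:unipotent} and~\ref{c:finite}), which only use the specialization inequality. So your approach is more scrupulous about the meaning of ``upper semi-continuous'', but the part you flag as the ``main obstacle'' is precisely the part the paper omits; if you drop it, your proof and the paper's coincide.
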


\begin{proof}
 We prove both statements simultaneously.  Let $d_\bullet$ one of the function.
The problem is local so we can assume that $S=\Spec(A)$ 
for $A$ an integral local ring of fraction field $K$
and residue field $k$. We have to show that $d_\bullet(G_k) \geq d_\bullet(G_K)=d$.
By using the standard yoga of noetherian reduction \cite[VI$_B$.10.2]{SGA3}, 
we can assume that $A$ is furthermore noetherian.
If $A$ is a field, we have $K=k$ and this is obvious.
We assume that $A$ is not a field. 
According to \cite[prop. 7.1.7]{EGA2}, there exists an extension
$L$ of $K$ (of finite type) and equipped with a discrete valuation such that  its valuation ring $B$ dominates $A$, that is, 
$A \subset B$ and $\gm_B \cap A = \gm_A$. 
We denote by $l$ the residue field of $B$.
Since $d_\bullet(G_k)= d_\bullet(G_l)$ and $d_\bullet(G_K)=d_\bullet(G_L)$, 
it is enough to show that $d_\bullet(G_l) \geq d_\bullet(G_L)=d$.

In other words  the problem reduces to the case of  a discrete valuation ring
$A$ with  fraction field $K$.
Our assumption is that $G_{\ol{K}}$ contains a closed subgroup of dimension $d$
which is commutative (resp.\ nilpotent, solvable, affine commutative,
unipotent, commutative unipotent, nilpotent, nilpotent trigonalizable,
nilpotent, nilpotent trigonalizable, nilpotent, trigonalizable, solvable).
Then there exists a finite $K$--subextension $K'$ of $\ol{K}$
such that the same holds for $G_{K'}$.
Let $v'$ be an extension of the valuation $v_K$ to $K'$ and denote
by $B'$ its valuation ring and by $k'$ its residue field. 
Once again we have $d_\bullet(G_k)= d_\bullet(G_{k'})$ and $d=d_\bullet(G_K)=d_\bullet(G_{K'})$,
so that it is enough to show that $d_\bullet(G_{k'}) \leq d$.
Proposition \ref{prop_regular}.(2) and (3) shows that $G_{k'}$ contains a closed subgroup of dimension $d$
which is commutative (resp.\ nilpotent, solvable, affine commutative,
unipotent, commutative unipotent, nilpotent, nilpotent trigonalizable,
nilpotent, nilpotent trigonalizable, nilpotent, trigonalizable, solvable).
We get  then  $D_\bullet(k', G_k) \geq d$ so a fortiori
$d_\bullet(G_k) \geq d$.
\end{proof}

\section{Finite Groups}  \label{s:base}

One motivation for considering this question comes from a problem about finite groups.

Let $G$ be a group acting on a set $X$.   A base of $G$ acting on $X$ is a subset $Y$ of $X$ such 
any element of $g \in G$ which fixed $Y$ pointwise acts trivially
on $X$.    The base size $b(G,X)$ is the minimal cardinality of a base.  
In the case of finite groups, this has been  classical object of study for more than $150$ years.  
    This has had many applications (e.g. in computational group theory).  
One is also interested in this from a probabilistic point of view;  what is the proportion of the subsets of size $b$ which
are a base.  

In \cite{BGS},  this was considered for $G$ a simple algebraic group acting on the homogeneous space $G/M$ with $M$
a maximal closed subgroup and in almost cases $b(G,M)$ was determined exactly (in a few cases, there was a small range
of possible values).   In this case one can consider two other quantities.  We define $b^0(G,X)= c$ to be the smallest positive
integer $c$ so that there is subset  $Y$ of $X$ of size $c$ so that the pointwise stabilizer of $Y$ is finite and $b^1(G,X)=e$
where $e$ is the smallest positive integer such that the pointwise stabilizer of $e$ points is trivial.  It is easy to show
that $b^0 \le b \le b^1 \le b^0 + 1$.  

Note that this can be rephrased in terms of $G$ acting on $(G/M)^e$ and asking if there is a regular orbit or an orbit
of $\dim G$ or if the generic orbit is regular.  

More generally let $G$ be an algebraic group acting on a variety $X$ and assume that the action is defined over a finite field.  We are interested
the stabilizers in $G(\FF_q)$ of a point $ x \in X(\FF_q)$  (and more generally we consider Steinberg-Lang endomorphisms with finite group of fixed
points).   Note that $X(\FF_q)$ may not be a single orbit for $G(\FF_q)$ if the stabilizer $G_x$ is not connected.  

As noted the stabilizer scheme  $\{(g,x) | g \in G, x \in X, gx=x\}$ satisfies our hypotheses and so our results apply in this case.
In particular, if for generic $x$,  $G_x$ contains a $d$-dimensional connected unipotent subgroup, then $G_y$ does for all $y \in X$.  
Then Corollary \ref{c:finite} implies that if $y \in X(\FF_q)$ and $G_y$ has a smooth $d$-dimensional connected unipotent subgroup, then 
$G_y(\FF_q)$ contains a subgroup of order $q^d$.  
In particular, this gives lower bounds for the base size for $G(\FF_q)$ in terms of the base size of $G$ (there are examples
where the base size of the finite group can be smaller or larger although not by much -- see \cite{BGS}).

\section{Derived Subgroups} \label{s:derived} 

\subsection{\quad}
For an algebraic group $G$ defined over a field $k$,
we define $d(G)$ (resp.\ $d^0(G)$)   the dimension of the derived  group 
of the smooth  $\ol{k}$--group $G_{\ol k, red}$ (resp. the smooth connected  $\ol{k}$--group $G_{\ol k, red}^0$).
If $G$ is smooth, we have  $d(G) =\dim_k(DG)$ and  $d^0(G) = \dim_k(D(G^0))$.

It is convenient to introduce a third dimension function
$d^+(G)$  which is the supremum of the dimensions of the $C_n$'s
where $C_n$ stands for  the schematic image of 
 the commutator map $c_n: G^{2n} \to G$, $c_n(x_1, y_1, \dots, x_n, y_n)= [x_1, y_1] \dots [x_n, y_n]$.

Since the formation of the schematic image commutes with flat base change,
$d^+(G)$ is insensitive to an arbitrary field extension.
We have  $d^0(G) \leq d(G) \leq d^+(G)$  and $d(G) = d^+(G)$ for $G$ smooth.

\begin{proposition} \label{prop_plus} Let $S$ be a scheme and let $G$ be a
 flat  $S$--group scheme  of finite presentation.
Then the function $s \mapsto d^+(G_{\kappa(s)})$ is 
lower semi-continuous.
\end{proposition}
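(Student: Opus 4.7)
The plan is to reduce to fixing $n$ and showing that $s \mapsto \dim(I_n(s))$ is lower semi-continuous, where $I_n(s) \subseteq G_{\kappa(s)}$ denotes the schematic image of $c_{n,\kappa(s)} : G_{\kappa(s)}^{2n} \to G_{\kappa(s)}$. Since $d^+(G_{\kappa(s)}) = \sup_n \dim(I_n(s))$ and a pointwise supremum of lower semi-continuous functions is lower semi-continuous, this will suffice. Standard noetherian reduction \cite[VI$_B$.10.2]{SGA3} lets us assume $S$ noetherian. Note that I avoid identifying $I_n(s)$ with the fiber over $s$ of the schematic image of $c_n$ in $G$, since base change to a residue field is not flat in general.

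Fix $n$. Chevalley's theorem on upper semi-continuity of fiber dimension \cite[IV.13.1.3]{EGA4} applied to the $S$-morphism $c_n : G^{2n} \to G$ produces an upper semi-continuous function $\psi : G^{2n} \to \mathbb{Z}$, $\psi(x) = \dim_x\bigl(c_n^{-1}(c_n(x))\bigr)$. For $x \in G^{2n}_{\kappa(s)}$ the point $c_n(x)$ lies in $G_{\kappa(s)}$, so the entire scheme-theoretic preimage lies in $G^{2n}_{\kappa(s)}$; thus $\psi(x) = \dim_x\bigl(c_{n,\kappa(s)}^{-1}(c_{n,\kappa(s)}(x))\bigr)$, a fiber dimension that is intrinsic to $G_{\kappa(s)}$. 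Put $m(s) := \min_{x \in G^{2n}_{\kappa(s)}} \psi(x)$.

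The first main step is upper semi-continuity of $m$ on $S$. Given $s_0 \in S$, pick $x_0 \in G^{2n}_{\kappa(s_0)}$ with $\psi(x_0) = m(s_0)$; upper semi-continuity of $\psi$ yields an open $V \subseteq G^{2n}$ containing $x_0$ on which $\psi \le m(s_0)$. Because $G^{2n} \to S$ is flat of finite presentation, it is an open morphism, so the image $W$ of $V$ in $S$ is an open neighborhood of $s_0$; for each $s \in W$ there is some $x \in V \cap G^{2n}_{\kappa(s)}$, yielding $m(s) \le m(s_0)$. The second main step is the identity
\[
\dim(I_n(s)) = 2n \cdot \dim(G_{\kappa(s)}) - m(s).
\]
To prove it, one uses that the connected (equivalently irreducible) components of the finite-type group scheme $G_{\kappa(s)}$ are topologically disjoint cosets of the identity component, of common dimension $\dim(G_{\kappa(s)})$. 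Hence $G^{2n}_{\kappa(s)}$ is equidimensional of dimension $2n\dim(G_{\kappa(s)})$, each $x$ belongs to a single irreducible component $X_i$, and disjointness forces $\psi(x) = \dim_x\bigl(X_i \cap c_{n,\kappa(s)}^{-1}(c_{n,\kappa(s)}(x))\bigr)$. Thus $\min_{x \in X_i}\psi(x)$ equals the generic fiber dimension $g_i$ of the dominant morphism $X_i \to \overline{c_{n,\kappa(s)}(X_i)}$, the classical formula gives $\dim\overline{c_{n,\kappa(s)}(X_i)} = 2n\dim(G_{\kappa(s)}) - g_i$, and maximizing over $i$ (with $\min_i g_i = m(s)$) yields the identity. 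Since $\dim(G_{\kappa(s)})$ is locally constant on $S$ by flatness of $G/S$ \cite[VI$_B$.4.3]{SGA3}, lower semi-continuity of $\dim(I_n(s))$ follows.

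The main obstacle I anticipate is establishing the dimension identity in this generality: one must ensure that the connected components of the possibly non-reduced group scheme $G_{\kappa(s)}$ really are disjoint irreducible cosets of the identity component, so that local fiber dimensions of $c_{n,\kappa(s)}$ decouple across components and the classical image-dimension formula can be applied one component at a time. The remainder of the argument — combining Chevalley's theorem with openness of flat morphisms — is fairly routine.
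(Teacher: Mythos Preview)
Your argument is correct and takes a genuinely different route from the paper's proof. The paper first reduces (via the same noetherian and valuative machinery used for Theorem~\ref{thm_main}) to the case where $S=\Spec(A)$ for a DVR $A$ with fraction field $K$ and residue field $k$, and then for each $n$ takes the schematic closure $\gC_n\subset G$ of $C_{n,K}$: flatness of $\gC_n$ over $A$ gives equidimensional fibers, and density of $G_K^{2n}$ in $G^{2n}$ forces $c_{n,k}$ to factor through $(\gC_n)_k$, so $C_{n,k}\subset (\gC_n)_k$ and $\dim C_{n,k}\le \dim C_{n,K}$. By contrast, you work directly over an arbitrary noetherian base: using Chevalley's fiber-dimension theorem together with openness of the flat, finitely presented map $G^{2n}\to S$ you show that the minimum fiber dimension $m(s)$ is upper semi-continuous, and then derive lower semi-continuity of $\dim I_n(s)$ from the identity $\dim I_n(s)=2n\dim G_{\kappa(s)}-m(s)$ and the local constancy of $\dim G_{\kappa(s)}$. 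The paper's schematic-closure argument is shorter and fits the techniques used elsewhere in the article; your approach avoids the reduction to DVRs altogether and makes the role of flatness explicit through the two standard ingredients (Chevalley and openness). Regarding the obstacle you flag: the irreducibility of the identity component of a finite-type group scheme over a field (and hence of each connected component, since $G\to G/G^0$ is an fppf $G^0$-torsor over the \'etale $\kappa(s)$-scheme $\pi_0(G_{\kappa(s)})$) is established in \cite[VI$_A$.2.4]{SGA3}, so no smoothness hypothesis is needed and your component-by-component computation goes through.
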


\begin{proof} Using the same kind of argument  as in the proof of Theorem \ref{thm_main}
it is enough to deal with the case $S=\Spec(A)$ where $A$ is a 
DVR with fraction field $K$ and of residue field $k$.
  Let $n \geq 1$ be an integer and  consider the commutator map $c_n: G^{2n} \to G$. Let $\gC_n  \subset G$ be the 
schematic closure of $C_{n,K}$; it is flat over $A$ so equidimensional of dimension $d$ according to 
\cite[12.1.1.5]{EGA4}.
Since $G_K^{2n}$ is dense in $G^{2n}$, 
it follows that $c_n$ factorizes through $\gC_n$ so that 
 $c_{n, k}$ factorizes through $(\gC_n)_k$. Therefore
 $C_{n,k} \subset (\gC_n)_k$ whence
 $d^+(G_k) \leq d^+(G_{K})$.
\end{proof}

\begin{corollary} \label{cor_plus} 
Let $S$ be a scheme and let $G$ be an  $S$--group scheme  of finite presentation.

\smallskip

\noindent (1) Assume that $G$ is smooth.
Then the functions $s \mapsto d(G_{\kappa(s)})$ and $s \mapsto d^0(G_{\kappa(s)})$ are
lower semi-continuous.

\smallskip

\noindent (2) Assume that $S$ is irreducible with generic point 
$\xi$ such that $G_{\kappa(\xi)}$ is smooth. Then $d(G_{\kappa(\xi)}) \geq d(G_{\kappa(s)})$ 
for each $s \in S$.

\end{corollary}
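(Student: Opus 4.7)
The plan. For part (1), smoothness of $G\to S$ implies flatness and smoothness of every geometric fiber $G_{\kappa(s)}$, so the equality $d(G_{\kappa(s)}) = d^+(G_{\kappa(s)})$ (and analogously $d^0(G_{\kappa(s)}) = d^+(G^0_{\kappa(s)})$) holds pointwise by the discussion preceding Proposition~\ref{prop_plus}. Lower semi-continuity of $s \mapsto d(G_{\kappa(s)})$ then follows directly from Proposition~\ref{prop_plus} applied to the flat $S$-group scheme $G$. For $d^0$, I would Zariski-localize $S$ so that the identity component $G^0 \subseteq G$ exists as a smooth open subgroup scheme (standard for smooth group schemes, e.g. via the openness of the locus where fibers are connected, after possibly passing to an étale cover to trivialize the component group) and then apply Proposition~\ref{prop_plus} to $G^0$, whose formation is compatible with fibers.

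For part (2), the strategy is to reduce to a discrete valuation ring and exploit the flat closure of the generic fiber. Using the noetherian reduction and passage to a trait $\Spec A \to S$ that is standard in the proof of Theorem~\ref{thm_main} (with generic point mapping to $\xi$ and closed point to $s$), I would reduce to the case $S = \Spec A$ with $A$ a DVR of fraction field $K$ and residue field $k$, and $G_K$ smooth. The goal becomes $d(G_k) \leq d(G_K)$.

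Let $\tilde G \subseteq G$ denote the schematic closure of $G_K$ in $G$. By Lemma~\ref{lem_DVR0}, $\tilde G$ is a flat closed $A$-subgroup scheme of $G$ with $\tilde G_K = G_K$. Smoothness of $G_K$ gives the key identifications $d(G_K) = d^+(G_K) = d^+(\tilde G_K)$, and Proposition~\ref{prop_plus} applied to the flat group scheme $\tilde G$ then yields
\[
d^+(\tilde G_k) \leq d^+(\tilde G_K) = d(G_K).
\]

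The remaining step, and in my view the main obstacle, is the inequality $d(G_k) \leq d^+(\tilde G_k)$, from which the corollary follows by combining with the display above. The subtlety is that $\tilde G_k$ is only a closed subscheme of $G_k$ and may be proper, so one must argue that the reduced part $(G_k)_{\mathrm{red}}$, or at least the subgroup relevant for computing the derived group, is captured by the flat closure. I would attempt this by a density argument on the open smooth locus of $G \to \Spec A$, which contains $G_K$ and is flat, forcing the commutator maps $c_n \colon G^{2n} \to G$ to factor through $\tilde G$ on that locus and hence to control the derived dimension of the special fiber.
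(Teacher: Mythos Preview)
For part (1) your approach coincides with the paper's: smoothness gives $d=d^{+}$ fiberwise, so Proposition~\ref{prop_plus} applies directly to $G$; for $d^{0}$ one passes to the smooth $S$--group scheme $G^{0}$ of \cite[VI$_B$.3.10]{SGA3} and repeats.  (No \'etale localization is needed: $G^{0}$ exists globally as an open subgroup scheme once $G$ is smooth.)

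For part (2) you have missed the short argument and replaced it with a harder one.  The paper simply writes the chain
\[
d(G_{\kappa(\xi)}) \;=\; d^{+}(G_{\kappa(\xi)}) \;\geq\; d^{+}(G_{\kappa(s)}) \;\geq\; d(G_{\kappa(s)}),
\]
where the equality is smoothness of $G_{\kappa(\xi)}$, the first inequality is Proposition~\ref{prop_plus}, and the second is the general inequality $d\le d^{+}$ valid for \emph{every} algebraic group over a field.  The step you did not see is this last one: once $d^{+}(G_{\kappa(s)})$ is bounded, $d(G_{\kappa(s)})$ is automatically bounded too, with no need to compare $G_{k}$ to the special fiber of any flat model.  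Your ``main obstacle'' $d(G_{k})\le d^{+}(\tilde G_{k})$ therefore never enters the picture.

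Your detour through the schematic closure $\tilde G$ was presumably motivated by the flatness hypothesis in Proposition~\ref{prop_plus}, which indeed is not listed in~(2); the paper's one--line proof invokes that proposition without comment.  That concern is legitimate, but the flat--closure route does not repair it: the inequality $d(G_{k})\le d^{+}(\tilde G_{k})$ can genuinely fail.  Over a DVR $A$ with uniformizer $t$, take $G=\Spec\bigl(A[a,b,c]/(ta,tc)\bigr)$ with the Heisenberg law $(a,b,c)(a',b',c')=(a+a',\,b+b'+ac',\,c+c')$.  Then $G_{K}\cong\GG_{a,K}$ is smooth and abelian, the flat closure is $\tilde G\cong\GG_{a,A}$, so $d^{+}(\tilde G_{k})=0$; yet $G_{k}$ is the full Heisenberg group with $d(G_{k})=1$.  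In this example the smooth locus of $G\to\Spec A$ does not meet the special fiber at all, so your proposed density argument on that locus cannot control the commutators of $G_{k}$.  Thus the obstacle you identified is real and your suggested attack on it does not work.
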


\begin{proof} 
 (1) In this case $d^+(G_{\kappa(s)})=d(G_{\kappa(s)})$ so that 
Proposition \ref{prop_plus} implies that $d$ is lower semi-continuous.
 For the other function  we consider the (smooth) $S$--group scheme $G^0$ defined in \cite[VI$_B$.3.10]{SGA3}.
 We have $d^0(G_{\kappa(s)})= d(G^0_{\kappa(s)})$ so  $d^0$ is upper semi-continuous.

\smallskip

\noindent (2) We have $d(G_{\kappa(\xi)}) = d^+(G_{\kappa(\xi)}) \geq d^+(G_{\kappa(s)}) \geq d(G_{\kappa(s)}) $.

\end{proof}

This function $d$ may  fail to be upper   semi-continuous 
even in the case of a smooth affine group scheme over a DVR with connected fibers;
in that case it would be locally constant according to Corollary \ref{cor_plus}.(1).

\begin{lemma} \label{lem_BT} Let $k$ be a an algebraically closed 
field and $G$ be a split semisimple simply connected
$k((t))$-group assumed almost simple of rank $r$. Let $\gB$ be  Bruhat-Tits  $k[[t]]$--group scheme
attached to an Iwahori subgroup of $G(k((t)))$.
Then we have $$ d(\gB_k) \leq   d(G)- r < d(G) = \dim_{k((t))}(\gB_{k((t)))}).$$
\end{lemma}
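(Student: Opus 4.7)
The plan is to use Bruhat--Tits theory to describe the closed fiber $\gB_k$ explicitly, and to exploit the fact that $\gB_k$ has a large toral quotient to bound its derived dimension. First I would record the easy identifications on the right-hand side of the displayed chain: by construction $\gB$ is a smooth affine $k[[t]]$-group scheme whose generic fiber is $G$ itself (an Iwahori is open in $G(k((t)))$, so no contraction occurs at the generic point), and flatness of $\gB$ over $k[[t]]$ therefore gives
$$\dim_{k((t))}(\gB_{k((t))}) \;=\; \dim G \;=\; \dim(\gB_k).$$
Since $G$ is semisimple and simply connected, $G = DG$, whence $d(G) = \dim G$; this yields the equality on the far right.

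The substantive step is the upper bound $d(\gB_k) \leq \dim G - r$. By Bruhat--Tits theory in the split case, the closed fiber $\gB_k$ is a smooth connected solvable affine $k$-group admitting a Levi decomposition $\gB_k = T \ltimes U$, in which $T$ is the split maximal torus of rank $r$ (obtained by reduction modulo $t$ of the torus of $G$), and $U = R_u(\gB_k)$ is a smooth connected unipotent $k$-group of dimension $|\Phi| = \dim G - r$. The root subgroups of $U$ come, with respect to the Borel determined by the chosen alcove, from $U_\alpha(k[[t]])$ for $\alpha > 0$ and from $U_\alpha(t\,k[[t]])$ for $\alpha < 0$. The quotient $\gB_k/U \cong T$ being abelian forces $D(\gB_k) \subseteq U$, and hence $d(\gB_k) \leq \dim U = \dim G - r$.

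The strict inequality $\dim G - r < \dim G$ is automatic from $r \geq 1$, which holds because $G$ is almost simple of positive rank. The only nonformal input is the Levi decomposition of the closed fiber of the Iwahori group scheme, which I view as the main step and which is a standard output of Bruhat--Tits theory for Iwahori subgroups in the split case; everything else in the argument is elementary and reduces to flatness of $\gB$ together with the formality that the derived subgroup of a group with an abelian quotient lies in the kernel of that quotient.
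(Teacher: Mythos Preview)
Your proof is correct and follows essentially the same line as the paper's: both arguments produce an $r$-dimensional abelian (toral) quotient of $\gB_k$, forcing $D(\gB_k)$ into the kernel and giving $d(\gB_k) \leq \dim(\gB_k) - r = \dim G - r$. The only cosmetic difference is that the paper obtains this quotient by first exhibiting a surjection $\gB_k \twoheadrightarrow B_k$ onto a Borel of $G_k$ (via the Bruhat--Tits correspondence between parahorics contained in $G(k[[t]])$ and parabolics of $G_k$) and then passing to $B_k \twoheadrightarrow T$, whereas you invoke the Levi decomposition $\gB_k \cong T \ltimes U$ of the Iwahori closed fiber directly.
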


Such a  $\gB$ is smooth and has connected fibers according to \cite[prop. 4.6.32]{BT}.
 
\begin{proof} 
In this case $G_{k[[t]]}$ is a Bruhat-Tits group scheme attached to the maximal parahoric
subgroup $G(k[[t]])$ of $G(k((t)))$. The Bruhat-Tits correspondence \cite[th. 4.6.35]{BT}
is a bijection between the $k$--parabolic subgroups of $G_k$ and the parahoric
subgroups of   $G(k((t)))$ included in $G(k[[t]])$.
By taking a Borel subgroup $B_k$ of $G_k$, we get then a Iwahori subgroup
$\cB$ of $G(k((t)))$ and a Bruhat-Tits group scheme $\gB$ such that 
$B_k$ occurs as quotient of   $\gB_k$. In particular 
 $\gB_k$ maps onto a Borel $k$--subgroup  of $G_k$
so admits a commutative quotient of dimension $r$. It follows that 
$d(\gB_k) \leq   \dim(\gB_k) -r = \dim(G) - r =d(G)-r < d(G)$.
We have proven that for one specific Iwahori subgroup but 
this is enough by conjugacy reasons.
\end{proof}

We conclude by giving an example of a stabilizer scheme where the generic stabilizer is simple of dimension $3$
but some stabilizer is abelian (also of dimension $3$).  

Let $G=\Sp_4(k)=\Sp(V)$ for $k$ any algebraically closed field.  Let $X = V \oplus V$.  If $x=(v_1, v_2)$ is a generic
point, then the stabilizer of $x$ is the subgroup acting trivially on the nondegenerate $2$-space spanned by
$v_1$ and $v_2$ and so $G_x \cong \Sp_2(k)$.   If $y=(w_1, w_2)$ with $w_1$ and $w_2$ spanning a totally singular
$2$-space, then $G_y$ is the unipotent radical of the parabolic subgroup stabilizing the space spanned by $w_1$ and $w_2$.
In particular, for a generic point $x$, $G_x$  is nonsolvable while $G_y$ is abelian.

\subsection{Abelianization}
A variant is the following.
For an algebraic group $G$ defined over a field $k$,
we define $d_{ab}(G)$ (resp.\ $d^0_{ab}(G)$)   the dimension of the abelianization of 
 the smooth  $\ol{k}$--group $G_{\ol k, red}$ (resp. the smooth connected  $\ol{k}$--group $G_{\ol k, red}^0$).
We have  $d(G) + d_{ab}(G)=  \dim_k(G)$ 
and similarly  $d^0_{ab}(G) + d^0_{ab}(G)=  \dim_k(G)$.

\begin{corollary} \label{cor_ab} 
Let $S$ be a scheme and let $G$ be an  $S$--group scheme  of finite presentation.

\smallskip

\noindent (1) Assume that $G$ is smooth.
Then the functions $s \mapsto d_{ab}(G_{\kappa(s)})$ and $s \mapsto d^0_{ab}(G_{\kappa(s)})$ are
upper semi-continuous.

\smallskip

\noindent (2) Assume that $S$ is irreducible with generic point 
$\xi$ such that $G_{\kappa(\xi)}$ is smooth. Then $d_{ab}(G_{\kappa(\xi)}) \leq d_{ab}(G_{\kappa(s)})$ 
for each $s \in S$.

\end{corollary}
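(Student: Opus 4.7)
The plan is to reduce Corollary \ref{cor_ab} directly to Corollary \ref{cor_plus} using the additive identities $d(G) + d_{ab}(G) = \dim_k(G)$ and $d^0(G) + d^0_{ab}(G) = \dim_k(G)$ recorded in the paragraph preceding the statement. Writing $d_{ab}$ and $d^0_{ab}$ as differences between a total dimension function and $d$, $d^0$ converts lower semi-continuity (or the inequality) for the latter into upper semi-continuity (or the reverse inequality) for the former, provided one controls how the total dimension function $s \mapsto \dim_{\kappa(s)}(G_{\kappa(s)})$ behaves along $S$.

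For part (1), since $G/S$ is smooth it is in particular flat of finite presentation, so $s \mapsto \dim_{\kappa(s)}(G_{\kappa(s)})$ is locally constant on $S$ by \cite[VI$_B$.4.3]{SGA3}. Corollary \ref{cor_plus}(1) says that $s \mapsto d(G_{\kappa(s)})$ is lower semi-continuous, whence
\[
s \mapsto d_{ab}(G_{\kappa(s)}) = \dim(G_{\kappa(s)}) - d(G_{\kappa(s)})
\]
is upper semi-continuous as the difference of a locally constant function and a lower semi-continuous one. For $d^0_{ab}$ one applies the same argument to the smooth open $S$-subgroup scheme $G^0$ of $G$ (which exists by \cite[VI$_B$.3.10]{SGA3} and is still of finite presentation), using Corollary \ref{cor_plus}(1) for $d^0$ and observing that $\dim(G^0_{\kappa(s)}) = \dim(G_{\kappa(s)})$ because $G^0$ is open in $G$.

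For part (2), the identity $d + d_{ab} = \dim$ continues to hold fiber by fiber even without a global smoothness hypothesis, since the definition is pointwise after passing to the reduced subscheme over $\overline{\kappa(s)}$ and reduction preserves dimension. For an arbitrary $S$-group scheme of finite presentation, the function $s \mapsto \dim_{\kappa(s)}(G_{\kappa(s)})$ is upper semi-continuous on $S$ \cite[VI$_B$.4.3]{SGA3}, so, with $\xi$ the generic point of the irreducible scheme $S$,
\[
\dim(G_{\kappa(\xi)}) \leq \dim(G_{\kappa(s)})
\]
for every $s \in S$. Combining this with the inequality $d(G_{\kappa(\xi)}) \geq d(G_{\kappa(s)})$ of Corollary \ref{cor_plus}(2) yields
\[
d_{ab}(G_{\kappa(\xi)}) = \dim(G_{\kappa(\xi)}) - d(G_{\kappa(\xi)}) \leq \dim(G_{\kappa(s)}) - d(G_{\kappa(s)}) = d_{ab}(G_{\kappa(s)}),
\]
which is the desired inequality.

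There is no genuine obstacle here: the argument is a formal manipulation mirroring Corollary \ref{cor_plus} across the identity $d + d_{ab} = \dim$. The only subtlety worth flagging is the asymmetry in the dimension input used in the two parts: local constancy of $\dim$ in the smooth setting of (1) versus merely upper semi-continuity of $\dim$ in (2). In both cases this is precisely the direction of semi-continuity one needs to turn lower semi-continuity (resp.\ the lower-bound inequality) for $d$ into upper semi-continuity (resp.\ the upper-bound inequality) for $d_{ab}$.
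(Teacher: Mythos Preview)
Your proof is correct and follows essentially the same approach as the paper: in both cases one writes $d_{ab} = \dim - d$, uses local constancy of $\dim$ in the smooth setting of (1) and upper semi-continuity of $\dim$ in the general setting of (2), and then invokes Corollary \ref{cor_plus} for the behavior of $d$. The paper is slightly terser (it does not spell out the $d^0_{ab}$ case via $G^0$, and it cites \cite[VI$_B$.4.1]{SGA3} rather than VI$_B$.4.3 for the upper semi-continuity of $\dim$ in part (2)), but the substance is identical.
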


\begin{proof} 
 (1) Since the dimension is a locally constant function,
 the statement follows from Corollary \ref{cor_plus}.(1).    

\smallskip

\noindent (2) Corollary \ref{cor_plus}.(2)  states that $d(G_{\kappa(\xi)}) \geq d(G_{\kappa(s)})$
so that $-d(G_{\kappa(\xi)}) \leq - d(G_{\kappa(s)})$.
On the other hand we have $\dim(G_{\kappa(\xi)}) \leq \dim(G_{\kappa(s)})$ according to 
\cite[VI$_B$.4.1]{SGA3}. By summing the inequalities we obtain  $d_{ab}(G_{\kappa(\xi)}) \leq d_{ab}(G_{\kappa(s)})$ 
as desired.

\end{proof}

\section{Appendix: Alternate proof of Theorem \ref{t:unipotent rank}}\label{sec_brian}

We present an alternate proof of 
the upper semi-continuity of $d_u$.
It involves the following preliminary fact.

\begin{lemma}  \label{lem_brian} Let $F$ be a field.

\smallskip

\noindent (1) Let  $M$ be an  affine smooth connected $F$--group.
The following assertions are equivalent:

\smallskip

(i) $M$ is unipotent;

\smallskip

(i') $M_{\ol F}$ is unipotent;

\smallskip

(ii) All $F$--tori of $M$ are trivial;

\smallskip

(ii')  All $\ol{F}$--tori of $M$ are trivial;

\smallskip

\smallskip
 
 \noindent (2)  Let
$f: G \to H$ be an isogeny between affine smooth connected
$F$--groups. Then $G$ is unipotent if and only if $H$ is unipotent.
\end{lemma}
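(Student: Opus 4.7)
The equivalence (i) $\Leftrightarrow$ (i') is tautological given the geometric definition of unipotence recalled at the start of Section 2 (a group is declared unipotent via its $\ol F$-points in some faithful representation). For (i') $\Rightarrow$ (ii'), any $\ol F$-torus of $M_{\ol F}$ embeds into the strictly upper triangular group, in which the only torus is trivial; this at once gives both (i') $\Rightarrow$ (ii') and, a fortiori, (i) $\Rightarrow$ (ii).

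The substantive point is the descent (ii) $\Leftrightarrow$ (ii'), and here my plan is to appeal to Grothendieck's theorem (SGA3, XIV.1.1) that a smooth connected affine $F$-group admits a maximal torus $T$ defined over $F$ itself. Base change preserves maximality, so $T_{\ol F}$ is a maximal torus of $M_{\ol F}$ and therefore has the common dimension of all maximal $\ol F$-tori. Consequently "no nontrivial $F$-torus" $\Leftrightarrow$ $T = 1$ $\Leftrightarrow$ $T_{\ol F} = 1$ $\Leftrightarrow$ "no nontrivial $\ol F$-torus of $M_{\ol F}$".

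For (ii') $\Rightarrow$ (i') I would pass to the reductive quotient $N := M_{\ol F}/R_u(M_{\ol F})$. If $\overline T$ is a maximal torus of $N$, its preimage $H \subset M_{\ol F}$ is smooth connected solvable with $R_u(H) = R_u(M_{\ol F})$, so the Levi decomposition for solvable groups over an algebraically closed field writes $H = T \ltimes R_u(M_{\ol F})$ with $T$ a torus mapping isomorphically onto $\overline T$. Hypothesis (ii') forces $T = 1$, hence $\overline T = 1$; but a reductive group whose maximal torus is trivial must itself be trivial (otherwise the root system would supply $\mathrm{SL}_2$-copies with nontrivial tori), so $N = 1$ and $M_{\ol F} = R_u(M_{\ol F})$ is unipotent.

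Part (2) is then short. The direction "$G$ unipotent $\Rightarrow$ $H$ unipotent" is immediate because $H \cong G/\ker f$ is a quotient of a unipotent group. Conversely, suppose $H$ is unipotent and let $T \subset G_{\ol F}$ be a maximal torus; then $f(T) \subset H_{\ol F}$ is a torus inside a unipotent group, hence trivial, so $T \subset (\ker f)_{\ol F}$. As $\ker f$ is finite and $T$ is smooth connected, $T = 1$, and part (1) applied to $G$ gives the conclusion. The one non-formal step in the whole argument is the descent (ii) $\Rightarrow$ (ii'), which is precisely where one calls on the deep input that maximal tori exist over the base field $F$; everything else is routine structure theory over an algebraically closed field.
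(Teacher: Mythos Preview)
Your argument is correct and follows the same route as the paper: (i) $\Leftrightarrow$ (i') by definition, (i') $\Rightarrow$ (ii') because a unipotent group admits no nontrivial torus, the descent (ii) $\Leftrightarrow$ (ii') via Grothendieck's theorem that a maximal torus exists over $F$, and part (2) by tracking a maximal torus through the isogeny. You additionally spell out the implication (ii') $\Rightarrow$ (i') via the reductive quotient, a step the paper's written proof leaves tacit.
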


\begin{proof}
(1) Let $\rho: M \to \GL_n$ be  faithful linear representation.
 The equivalence $(i) \Longleftrightarrow
(i')$  is by definition since in both cases it 
says that $\rho( G(\ol{F}))$ consists in unipotent elements.

\smallskip

\noindent $(i') \Longrightarrow (ii')$.
Let $ \GG_{m,\ol F}^r \hookrightarrow  M_{\ol F}$ be  
a $\ol F$--subtorus. If $r  \geq 1$, we pick an element
$t \not = 1 \in (\ol F)^r$. We have that 
$\rho(t) \in GL_n( \ol{k})$ is unipotent and
semisimple so is $1$ contradicting $t \not = 1$. We conclude that  $r=1$.

\smallskip 
\noindent $(ii') \Longrightarrow (ii)$. This is obvious. 

\smallskip

\noindent $(ii) \Longrightarrow (ii')$.
According to the conjugacy theorem \cite[11.3]{B},n 
all maximal $\ol{F}$--subtori of $G_{\ol F}$ are conjugated. 
According to a result of Grothendieck \cite[18.2(i)]{B},  there is a maximal $\ol{F}$-torus $T$ in 
$M_{\ol F}$ that is defined over $F$.
Our assumption implies that $T=1$.
According to the conjugacy theorem for maximal
$\ol{F}$-subtori of $G_{\ol F}$, \cite[11.3]{B}
we conclude that all $\ol{F}$--tori of $M$ are trivial.

\smallskip

\noindent (2) By (1) we can assume $F$ is algebraically closed and it suffices to show that some maximal torus is trivial.  For a maximal torus $T$ in $G$ its image $f(T)$ is a maximal torus in $H$ by \cite[Prop. 11.14.(1)]{B}. 
Since $\ker(f)$ is finite we have $f(T)=1$ if and only if
 $T = 1$. The proof is completed.
\end{proof}

We come now to the proof of Theorem \ref{t:unipotent rank}.
The beginning is verbatim that of the proof of
Theorem \ref{thm_main}, that is, a reduction to 
the case of the spectrum of a DVR $A$
with fraction field $K$ and residue field $k$.
We are given a separated $A$--group scheme $G$ of finite presentation
and want so show the inequality 
$d_u(G_k) \geq d_u(G_K)$. We put $d=d_u(G_K)$. 
Using the insensitivity of $d_u$ by change of fields (Lemma \ref{lem_alg_closed2})
we can assume that $A$ is complete.
Also we are  allowed to make arbitrary 
finite  extensions of $K$ so that we can assume that 
$G_K$ admits a  smooth connected unipotent $K$--group 
$U$ of dimension $d$.
Letting $\gH$ be the schematic closure of $U$ in $G$,
it is separated flat over $A$ and of finite presentation.
Raynaud's affiness criterion (see \S \ref{subsec_DVR}) shows that $\gH$
is affine. Replacing $G$ by $\gH$
we can then assume that the $A$--group scheme $G$ is  affine flat, 
 and that $G_K$ is smooth unipotent connected.

We consider the case $G$ smooth.
This enables to  deal with the neutral component $G^0$ of $G$
\cite[VI$_B$.3.10]{SGA3} which is 
a smooth open $A$--subgroup $G$ such that 
$G^0_K$ (resp.\ $G^0_k$) is the neutral component of
the algebraic group $G_K$ (resp.\ $G_k$). 
Also the $A$--group scheme $G^0$ is affine according to Raynaud's criterion. 

 Lemma \ref{lem_brian}.(1) states that 
$G_k^0$ is unipotent if and only if 
all tori of $G_k^0$  are trivial.
 Let $T_0$ be a maximal  torus  of $G_k^0$. 
 Since $A$ is henselian, $T_0$ lifts to a  subtorus $T$ of $G^0$ by using Grothendieck's representability
 theorem \cite[XI.4.1]{SGA3}.
Since $G_K$ is unipotent,  $T_K$ is trivial so that $T=1$
and $T_0=1$. It follows that $G_k^0$ is unipotent so that $d_u(G)=d$.

We consider now  the general case. 
According to \cite[prop. 3.4]{PY} (based on 
\cite[app. II]{A}), there exists a 
local extension $A'$ of $A$ of DVR's
such that the normalization $\widetilde G'$
of $G'= G \times_A A'$ is smooth over $A'$
and such that the fraction field $K'$ is finite over $K$.
We denote by $k'$ the residue field of $A'$.
According to \cite[thm. A.6]{PY}
 the normalization morphism $h: \widetilde G' \to G'$
 is finite. In particular the morphism of smooth affine 
 connected $K'$--groups
 $h_{K'}^0: \bigl(\widetilde G')_{K'}\bigr)^0 \to G_{K'}$ is  an isogeny between smooth affine connected $K'$--groups.
 Since $G_{K'}$ is unipotent,  Lemma \ref{lem_brian}.(2) 
 shows that  $(\widetilde G')_{K'}^0$ is  unipotent.
 
 The smooth case applied to $(\widetilde G')^0$ over $A'$
 shows that $(\widetilde G')^0_{k'}$ is unipotent of dimension $d$.
 Since the homomorphism $h_k^0: (\widetilde G')^0_{k'} \to (G'_{k'})^0 \cong (G_k)^0 \times_k k'$ is finite, Lemma
 \ref{lem_brian}.(2) shows that the $k'$--subgroup 
 $(G'_{k'})^0/ \ker(h_{k'}^0)$ 
 of $G_{k'}$ is unipotent of dimension $d$. Thus $d_u(G_k) \geq d$.

\bigskip


\begin{thebibliography}{999999}


\bibitem[A]{A} S. Anantharaman, {\it Sch\'emas en groupes, 
espaces homog\`enes et espaces alg\'ebriques sur une base de dimension 1},  Bull. Soc. Math. France, M\'em. {\bf 33} (1973).

\bibitem[B]{B}  A. Borel, {\it Linear Algebraic Groups}, Second edition. Graduate Texts in Mathematics, 126. Springer--Verlag, New York, 1991.


\bibitem[BT]{BT} F.  Bruhat  et J.  Tits,  {\it
Groupes r\'eductifs sur un corps local 
{\uppercase\expandafter{\romannumeral 2}}},
  Publ. Math. IHES  {\bf 60} (1984).

\bibitem[BGS]{BGS}  T. Burness, R. Guralnick and J. Saxl,  
{\it On base sizes for algebraic groups},  J. Eur. Math. Soc. (JEMS) 
{\bf 19} (2017), no. 8, 2269–2341.


    
    
\bibitem[D-G]{DG} M.  Demazure, P. Gabriel, {\it Groupes alg\'ebriques},
    North-Holland (1970).


\bibitem[GG]{GG}  S. Garibaldi and R. Guralnick,  
{\it Generic stabilizers for simple algebraic groups},
 Michigan Math. J. {\bf 72} (2022), 343-387. 
 

\bibitem[EGA2]{EGA2} A.\ Grothendieck (avec la collaboration de J.\ Dieudonn\'e), 
{\it El\'ements de G\'eom\'etrie Alg\'ebrique II}, Publications 
math\'ematiques de l'I.H.\'E.S.  no 8 (1961).


\bibitem[EGA4]{EGA4} A.\ Grothendieck (avec la collaboration de J.\ Dieudonn\'e), 
{\it El\'ements de G\'eom\'etrie Alg\'ebrique IV}, Publications math\'ematiques de l'I.H.\'E.S. 
no 20, 24, 28 and 32 (1964 - 1967).

\bibitem[GL]{GL}  R. Guralnick, R. Lawther,   Generic stabilizers in actions of simple algebraic groups,  Mem. Amer. Math. Soc., to appear.


\bibitem[GW]{GW} U.~G\"ortz and T.~Wedhorn, {\em Algebraic Geometry I}, second edition, Springer Fachmedien Wiesbaden, 2020.
    
    
    
\bibitem[P]{P}
V.P. Platonov, {\it Theory of algebraic linear groups and periodic groups} (in Russian), Izv.
Akad. Nauk SSSR  Ser. Mat. {\bf 30} (1966), 573-620; English translation, AMS Translations {\bf 69} (1968), 61-110.

\bibitem[PV]{PV}   V. L. Popov and E. B. Vinberg, Invariant theory, Encyclopedia of Mathematical Sciences, vol. 55, pp. 123--284, Springer-Verlag, 1994.

\bibitem[PY]{PY} G. Prasad, J.-Y. Yu, {\it 
On quasi-reductive group schemes}, 
with an appendix by Brian Conrad.
J. Algebraic Geom. {\bf 15} (2006),  507-549. 

\bibitem[SGA3]{SGA3} {\it S\'eminaire de G\'eom\'etrie alg\'ebrique de l'I.\ H.\ E.\ S., 1963-1964,
sch\'emas en groupes, dirig\'e par M.\ Demazure et A.\ Grothendieck},  
Lecture Notes in Math. 151-153. Springer (1970).

\bibitem[VD] {VD}  B. Ju  Ve\v{\i}sfe\v{\i}ler and I. V. Dolgachev,  
{\it Unipotent group schemes over integral group rings},  
 Izv. Akad. Nauk SSSR Ser. Mat. {\bf 38} (1974), 757-799.


\end{thebibliography}
\end{document}